\documentclass[a4paper,11pt,]{article}
\usepackage{amsmath,amssymb,amsthm}
\usepackage{mathtools,dsfont}
\usepackage{paralist,enumitem,bm,placeins,url}
\usepackage{color,graphicx,subfig} 
\usepackage[margin=30mm,top=40mm]{geometry}
\usepackage{cite}

\usepackage{tabularx}
\usepackage{booktabs}
\usepackage{indentfirst}
\usepackage{array}
\newcolumntype{C}[1]{>{\centering\arraybackslash}p{#1}}
\usepackage{tocbasic}
\DeclareTOCStyleEntry[
beforeskip=.2em plus 1pt,% default is 1em plus 1pt
]{tocline}{section}
\usepackage{hyperref} 
\hypersetup{%
	colorlinks=true,
	linkcolor=blue,%
	citecolor=red,%
	urlcolor=blue
}
\usepackage[normalem]{ulem}
\usepackage{mathrsfs}
\usepackage{pgfplots}
\usepackage{caption}
\pgfplotsset{compat=1.7}
\usepackage[title]{appendix}

\newtheorem{thm}{Theorem}[section]
\newtheorem{lem}[thm]{Lemma}
\newtheorem{rem}[thm]{Remark}

\newcommand{\vol}{\operatorname{vol}}
\newcommand*{\vv}[1]{\vec{\mkern0mu#1}}
\newcommand{\norm}[1]{\Vert#1\Vert}
\newcommand{\bR}{{\mathbb R}}
\newcommand{\mT}{\mathscr{T}}
\newcommand{\mQ}{\mathcal{Q}}
\newcommand{\bkap}{{\overline{\varkappa}}}
\newcommand{\Gt}{{\Gamma(t)}}
\newcommand{\Gm}{{\Gamma^m}}
\newcommand{\bV}{\mathbb{V}}
\newcommand{\dH}{{\rm d}\mathcal{H}}

\newcommand{\id}{{\rm id}}
\newcommand{\dd}[1]{\frac{\rm d}{{\rm d}#1}}
\newcommand{\ddt}{\dd{t}}
\newcommand{\nn}{\nonumber}
\newcommand{\ttau}{\Delta t}
\newcommand{\ipd}[1]{\bigl(#1\bigr)}
\newcommand{\nabs}{\nabla_{\!s}}

\newcommand{\mat}[1]{\uuline{#1}\rule{0pt}{0pt}}

\numberwithin{equation}{section}

{{\upshape\bfseries AMS subject classifications. }\ignorespaces}{}

\begin{document}
\title{%
Geometric structure-preserving parametric finite element approximations for the constrained Helfrich flow  
}

\author{Xiaoxiao Liu\footnotemark[1] 
    \and Quan Zhao\footnotemark[2]}

\renewcommand{\thefootnote}{\fnsymbol{footnote}}

\footnotetext[1]{School of Mathematical Sciences, University of Science and Technology of China, 230026 Hefei, China  \\ \tt(\href{mailto:xxl0226@mail.ustc.edu.cn}{xxl0226@mail.ustc.edu.cn})}

\footnotetext[2]{School of Mathematical Sciences, University of Science and Technology of China, 230026 Hefei, China \\
\tt(\href{mailto:quanzhao@ustc.edu.cn}{quanzhao@ustc.edu.cn}) }

\date{}
\maketitle

\begin{abstract}
We propose a structure-preserving parametric finite element method for the constrained Helfrich flow of closed curves and surfaces. The proposed method is based on a two-stage velocity-splitting strategy. In the first stage, the
normal velocity is computed from a curvature evolution equation, with the volume and surface area constraints imposed softly in terms of the normal velocity. This step approximates the gradient-flow structure of the Helfrich flow, and its fully discrete parametric finite element approximation leads to a linear system and yields an unconditional energy dissipation estimate at the fully discrete level. In the second stage, the surface mesh is updated by combining the computed normal velocity with a BGN-type tangential velocity. A time-weighted interface normal and an area-correction multiplier are also used to enforce exact preservation of the enclosed volume and surface area. This correction step leads to a nonlinear system, which can be efficiently solved by Newton iteration. The resulting method simultaneously achieves energy decay, exact geometric conservation, and good mesh quality. Numerical experiments for two-dimensional curves and three-dimensional surfaces, including nonsmooth initial data and nonzero spontaneous curvature, are presented to demonstrate the accuracy, robustness, and structure-preserving properties of the proposed method.
\end{abstract}

\noindent \textbf{AMS subject classifications.} 65M60, 65M15, 65M12, 35R01

\setlength{\parindent}{2em}

\renewcommand{\thefootnote}{\arabic{footnote}}

\setcounter{equation}{0}

%=============================================Introduction===============================================
\section{Introduction} \label{sec:intro}

The evolution of surfaces driven by curvature energies arises in a broad range of applications, including vesicle relaxation, geometric processing, materials science, and applied mathematics. One of the most prominent examples is the bending energy described by the Helfrich functional~\cite{Canham1970minimum,Helfrich73elastic},
\begin{equation}\label{eq:Helfrich_general}
E_H(\Gamma)=\int_{\Gamma}\left[\frac{k_c}{2} (\varkappa - \bkap)^2+k_G K\right]\, \dH^{d-1} 
\end{equation}
where $\Gamma$ is a closed hypersurface in $\mathbb{R}^d$ ($d=2,3$), $\varkappa$ and $K$ denote the mean and Gaussian curvatures, $k_c>0$ is the bending rigidity, $k_G$ is the Gaussian bending rigidity, $\bkap \in \mathbb{R}$ stands for the spontaneous curvature, and $\dH^{d-1}$ represents the integration with respect to the $(d-1)$-dimensional Hausdorff measure in $\bR^d$. For closed surfaces with fixed topology, the Gauss--Bonnet theorem implies that $\int_{\Gamma} K \, \dH^{d-1}$ is a constant. Hence, the Gaussian curvature term does not influence the variational dynamics and may be omitted.

For simplicity, we set $k_c = 1$ and therefore consider the total energy of the following form: 
\begin{equation}\label{eq:Helfrich_reduced}
E_{\bkap}(\Gamma)=\frac{1}{2}\int_{\Gamma}(\varkappa - \bkap)^2\, \dH^{d-1}.
\end{equation}
In most scenarios, it is also necessary to incorporate constraints on both the surface area and the enclosed volume for a more physically realistic description. Such constraints are fundamental in phenomena such as vesicle budding and cell shape transformations. This leads to the following minimization problem: 
\begin{equation}\label{eq:mini}
\min_{\Gamma}E_{\bkap}(\Gamma)\quad \text{s.t.}\quad \left\{\begin{array}{ll}
\text{(i)} & |\Gamma|:=\int_{\Gamma} 1 \dH^{d-1}=A_{0}; \\[0.5em]
\text{(ii)} & \vol(\Gamma):=\frac{1}{d} \int_{\Gamma}\vec\id \cdot \vec\nu \dH^{d-1}=V_{0};
\end{array}\right.
\end{equation}
where $|\Gamma|$ and $\vol(\Gamma)$ represent the surface area and enclosed volume of $\Gamma$, respectively, $A_0$ and $V_0$ are prescribed constants, $\vec\id$ denotes the identity map in $\bR^d$, and $\vec\nu$ is the outer unit normal to $\Gamma$. The minimization problem \eqref{eq:mini} gives rise to a geometric evolution equation via its $L^2$-gradient flow, known as the Helfrich flow (or Willmore--Helfrich flow); see \eqref{eq:3d_velocity}. The flow is a highly nonlinear fourth-order geometric evolution equation, whose coupling with global surface area and volume constraints poses significant challenges for accurate and stable numerical approximation.  

In recent decades, significant effort has been devoted to numerical approximations of geometric flows driven by curvature. A prominent line of research focuses on structure-preserving parametric finite element methods (PFEM), which are designed to ensure that key geometric properties of the continuous flow are preserved at the discrete level; see, e.g., \cite{BMN05,Dziuk08,pwfade,Barrett20,BZ21SPFEM,BGNZ22volume,BLani23} and \cite{Kemmochi25structure,Duan25,GJSZ25,BaoL25,GNZ26,GGLT26}. Recent advances have also highlighted the crucial role of tangential motion in maintaining mesh quality in parametric approximations. A variety of strategies have been proposed, including the BGN framework (Barrett, Garcke, and N\"urnberg)~\cite{BGN08parametric}, the MDR (minimal deformation rate) approach~\cite{Hu22evolving}, the DeTurck trick~\cite{DeTurck17}, as well as other related techniques~\cite{Remacle10,Duan24new,PAN26}. In this work, we propose a novel parametric finite element method for the Helfrich flow. In particular, we aim to preserve the intrinsic geometric properties of the Helfrich flow while simultaneously achieving high mesh quality within a unified variational framework. 

We next briefly review numerical approximations of the pure geometric Helfrich flow, with emphasis on parametric finite element methods. For related
developments on the Willmore flow, we refer the reader to the recent works~\cite{GNZ25willmore,GNZ26} and the references therein. BGN-type parametric finite element methods for the Helfrich flow, which exploit tangential degrees of freedom to improve mesh quality, were developed in \cite{BGN08willmore,pwfade}. The schemes in \cite{BGN08willmore} exhibit good mesh distribution properties, while the approach in \cite{pwfade} provides stable semi-discrete approximations and includes volume- and surface-area-preserving variants. However, for the fully discrete variants in these works, a simultaneous guarantee of energy dissipation, exact volume conservation, and exact surface area conservation is generally not available. The axisymmetric case was further investigated in \cite{pwfopen}, where the reduced geometric setting allows the constraint Lagrange multipliers to be treated more directly, leading to schemes with exact volume and surface area preservation. Other related numerical approaches include \cite{Elliott10,BONITO2010,chen2015}, as well as structure-preserving methods for planar curve flows~\cite{Kemmochi25structure}. In particular,
\cite{Kemmochi25structure} proposed a structure-preserving approximation for the constrained Helfrich flow of planar closed curves based on the discrete gradient method, preserving both energy dissipation and the geometric constraints.

Our work is inspired by the recent work \cite{GNZ26}, which proposed a fully energy-stable finite element method for the Willmore flow based on a two-stage velocity-splitting strategy. In the first stage, the gradient flow is approximated in terms of the normal velocity with the help of the curvature evolution equation. In the second stage, the computed normal velocity is used to update the evolving interface with a suitably chosen tangential velocity for better mesh quality. In the present work, we generalize this idea to the constrained Helfrich flow. We also adopt a stagewise strategy for the treatment of the geometric constraints. In the first stage, the volume and surface area constraints are imposed approximately through the normal velocity and the associated Lagrange multipliers, which we refer to soft constraints.  This treatment is compatible with the stability estimate of the underlying gradient-flow structure. The remaining geometric drifts are then corrected in the second stage. More precisely, the volume constraint can be enforced exactly with the help of the discrete time-weighted interface normals introduced in \cite{BZ21SPFEM}, while surface area preservation can again be enforced through a hard constraint with an additional Lagrange multiplier. The proposed method achieves, within a unified framework, unconditional energy decay, machine-precision constraint preservation, and excellent mesh quality. 

The rest of the paper is organized as follows. In Section~\ref{sec:mathform}, we derive the geometric PDE system for the constrained Helfrich flow and present its weak formulation together with the energy law and geometric preservation properties. In Section~\ref{sec:pfem}, we introduce the parametric finite element approximations, including the approximation of the gradient-flow structure, the mesh update, and the practical variants. There, we rigorously prove the energy stability and structure-preserving properties of the proposed method. In Section~\ref{sec:num}, we report a series of experiments for both planar curves and three-dimensional surfaces, with particular attention to energy decay, constraint preservation, mesh quality, spontaneous curvature effects, and nonsmooth initial data. Finally, conclusions and possible extensions are given in Section~\ref{sec:con}.

\section{Mathematical formulations}\label{sec:mathform}

Let $(\Gt)_{t\in[0,T]}$ be an evolving %$C^2$-
hypersurface in $\bR^d$ with its parameterization given by
\begin{equation}\label{eq:para}
\vec x(\cdot, t): \Upsilon\times[0,T]\mapsto\bR^d,\qquad d\in\{2,3\},
\end{equation}
where $\Upsilon\subset\bR^d$ is a fixed oriented 
reference manifold without boundary. The material velocity of $\Gt$ under this parameterization is defined as
\begin{equation}
\label{eq:velocity}
\vec{\mathscr{V}}(\vec x(\vec\rho,t), t) = \partial_t\vec x(\vec\rho,t)\qquad\forall(\vec\rho,t)\in\Upsilon\times [0,T].
\end{equation}
We also introduce the normal velocity of the surface as
\begin{equation}
\mathscr{V}(\vec x, t) = \vec{\mathscr{V}}(\vec x, t)\cdot\vec\nu(\vec x, t)\qquad\mbox{on}\quad\Gamma(t),
\end{equation}
where $\vec\nu$ is the unit normal to $\Gt$.

\subsection{The Helfrich flow and its new geometric PDE system}
With a slight abuse of notation, we now write the total energy in \eqref{eq:Helfrich_reduced} as  
\begin{equation}\label{eq:Henergy}
E_{\bkap}(\Gamma(t), \varkappa(t)) = \frac{1}{2}\int_{\Gamma(t)}(\varkappa-\bkap)^2\dH^{d-1},
\end{equation}
where for simplicity we denote $\varkappa(t) = \varkappa(\cdot, t)$. Here the curvature is defined by
\begin{equation*}
	\varkappa = -\nabs\cdot\vec\nu\qquad\mbox{on}\quad\Gt,
\end{equation*}
where $\nabs$ is the surface gradient operator implicitly defined on $\Gamma(t)$. Our sign convention is such that $\varkappa=-(d-1)$ for the unit sphere with outer normal. Using the transport theorem, one obtains
\begin{align} \label{eq:dtE}
\ddt E_{\bkap}(\Gamma(t),\varkappa(t)) &= \int_{\Gt}\left[\Delta_s\varkappa + (\varkappa-\bkap)|\nabs\vec\nu|^2 - \frac{1}{2}(\varkappa-\bkap)^2\varkappa\right]\mathscr{V}\dH^{d-1},
\end{align} 
where $\nabs\vec\nu$ is the Weingarten map, and $|\mat{A}|^2={\rm tr}(\mat{A}\,\mat{A}^T)$ is the Frobenius norm for any matrix $\mat{A}\in\bR^{d\times d}$. In the case $d=2$, $|\nabs\vec\nu|^2$ reduces to $\varkappa^2$. We also have the curvature identity 
\begin{equation}
	\varkappa\,\vec\nu = \Delta_s\vec\id\qquad\mbox{on}\quad \Gt,\label{eq:curidentity}
\end{equation}
where $\Delta_s=\nabs\cdot\nabs$ is the Laplace-Beltrami operator.

Taking the $L^2$-gradient flow of the energy $E_{\bkap}$ together with the volume and area constraints then yields the desired normal velocity of the evolving surface: 
\begin{equation}\label{eq:3d_velocity}
\mathscr V=-\Delta_s\varkappa-(\varkappa-\bar\varkappa)|\nabla_s\vec\nu|^2+\frac12(\varkappa-\bar\varkappa)^2\varkappa+\lambda(t)+\mu(t)\varkappa\quad\text{on }\Gamma(t),
\end{equation}
where $\lambda(t)$ and $\mu(t)$ are the Lagrange multipliers enforcing exact volume and area preservation:
\begin{subequations}\label{eq:conservation_laws}
\begin{align}
\frac{\mathrm{d}}{\mathrm{d} t}\vol(\Gamma(t)) & = \int_{\Gamma(t)} \mathscr{V} \dH^{d-1} = 0, \quad t \geq 0, \\
\frac{\mathrm{d}}{\mathrm{d} t}|\Gamma(t)| &= -\int_{\Gamma(t)} \mathscr{V} \varkappa \dH^{d-1} = 0, \quad t \geq 0.
\end{align}
\end{subequations}

We next follow the idea in \cite{GNZ26} and consider the material derivative of the curvature 
\begin{equation}
\partial_t^\circ \varkappa = \Delta_s\mathscr{V} + \mathscr{V}\,|\nabla_s\vec\nu|^2 + \mathscr{\vv V}\cdot\nabla_s\varkappa.
\end{equation}
Here, the first two terms on the right-hand side describe the change in curvature induced by the normal velocity, while the additional convective term $\mathscr{\vv V}\cdot\nabla_s\varkappa$ accounts for the contribution from the tangential velocity. This motivates the introduction of a new geometric PDE system for the Helfrich flow on $\Gamma(t)$.
\begin{subequations}\label{eq:newpde}
\begin{align}\label{eq:gf1}
\mathscr{V} &= -\Delta_s \varkappa -(\varkappa-\bar{\varkappa})|\nabla_s \vec\nu|^2+\frac12(\varkappa-\bar{\varkappa})^2\varkappa+\lambda + \mu\kappa, \\
\label{eq:gf2}
\partial_t^\circ \varkappa &= \Delta_s \mathscr{V}+ \mathscr{V}|\nabla_s\vec\nu|^2+ \vec{\mathscr{V}}\cdot\nabla_s \varkappa,\\
\label{eq:gf3}
0 &=\int_{\Gamma(t)}\mathscr{V}\dH^{d-1},\\
\label{eq:gf4}
0 &=\int_{\Gamma(t)}\mathscr{V}\kappa\dH^{d-1},\\
\label{eq:gf5}
\mathscr{\vv V}\cdot\vec\nu  &= \mathscr{V},\\
\kappa\,\vec\nu &= \Delta_s\vec\id.
\label{eq:gf6}
\end{align}
\end{subequations}
Here, \eqref{eq:gf1}--\eqref{eq:gf4} are used to approximate the gradient-flow structure, while \eqref{eq:gf5}--\eqref{eq:gf6} incorporate the normal velocity together with the desired BGN tangential velocity. We refer to $\varkappa$ as the evolution curvature, computed from the time
evolution equation, and to $\kappa$ as the geometric curvature, computed directly from the geometric surface. These two quantities coincide at the continuous level but differ after discretization. This separation allows the design of appropriate tangential velocities while ensuring discrete energy stability.

\subsection{Weak formulation}\label{sec:weak}

To formulate a weak form of the new PDE system \eqref{eq:newpde}, we denote by $\ipd{\cdot, \cdot}_{\Gamma}$ the $L^2$-inner product on $\Gamma(t)$. The following antisymmetric treatment of the convective term $\vec{\mathscr{V}}\cdot\nabla_s\varkappa$ in the curvature transport equation plays a key role in the stability estimate; its proof can be found in \cite[Lemma 3.1]{GNZ26}.
\begin{lem}
Let $\Gamma(t)$ be a closed evolving hypersurface for all $t \in [0,T]$. Then the following identity holds:
\begin{equation}
 \begin{aligned}
&\ipd{\vec{\mathscr{V}}\cdot\nabla_s\varkappa,~\chi}_{\Gamma} + \frac{1}{2}\ipd{\nabla_s\cdot\vec{\mathscr{V}},~[\varkappa-\bar{\varkappa}]\,\chi}_{\Gamma} =  \mathscr{A}_{\Gamma}(\vec{\mathscr{V}}, \varkappa-\bar{\varkappa},~\chi) - \frac{1}{2}\ipd{\vec{\mathscr{V}}\cdot\vec{\nu},~[\varkappa-\bar{\varkappa}]\,\varkappa\,\chi}_{\Gamma}, \label{eq:anti}
	\end{aligned}   
\end{equation}
for all $\chi\in H^1(\Gamma)$, where $\mathscr{A}_{\Gamma}$ is the antisymmetric term defined via
\begin{equation} \label{eq:antisym}
\mathscr{A}_{\Gamma}(\vec\eta, u, v) = \frac{1}{2}\bigl(\vec\eta\cdot\nabla_s u,~v\bigr)_{\Gamma} - \frac{1}{2}\bigl(\vec\eta\cdot\nabla_s v,~u\bigr)_{\Gamma}.
\end{equation}
\end{lem}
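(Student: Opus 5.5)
The identity follows from the integration-by-parts formula for surface divergence on a closed hypersurface. For a vector field $\vec f$ on the closed surface $\Gamma$ we have
\begin{equation*}
\int_\Gamma \nabs\cdot\vec f\,\dH^{d-1} = -\int_\Gamma \varkappa\,\vec f\cdot\vec\nu\,\dH^{d-1},
\end{equation*}
with the paper's sign convention $\varkappa=-\nabs\cdot\vec\nu$. Since $\nabs\cdot(\phi\vec\nu)=\phi\,\nabs\cdot\vec\nu$ when $\nabs\phi\perp\vec\nu$, the case of a normal field is consistent with this sign. We shall use the formula with $\vec f=\vec{\mathscr V}\,u\,\chi$, where $u:=\varkappa-\bkap$. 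Note that $\nabs u=\nabs\varkappa$ because $\bkap$ is constant, so the left-hand side of \eqref{eq:anti} involves $\ipd{\vec{\mathscr V}\cdot\nabs u,\chi}_\Gamma$.

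First expand the divergence by the product rule:
\begin{equation*}
\nabs\cdot(\vec{\mathscr V}u\chi)=(\nabs\cdot\vec{\mathscr V})\,u\chi+\chi\,\vec{\mathscr V}\cdot\nabs u+u\,\vec{\mathscr V}\cdot\nabs\chi .
\end{equation*}
Integrating and applying the formula above gives
\begin{equation*}
\ipd{\nabs\cdot\vec{\mathscr V},u\chi}_\Gamma+\ipd{\vec{\mathscr V}\cdot\nabs u,\chi}_\Gamma+\ipd{\vec{\mathscr V}\cdot\nabs\chi,u}_\Gamma=-\ipd{\vec{\mathscr V}\cdot\vec\nu,u\varkappa\chi}_\Gamma .
\end{equation*}

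Next multiply this relation by $\tfrac12$. This yields
\begin{equation*}
\tfrac12\ipd{\nabs\cdot\vec{\mathscr V},u\chi}_\Gamma=-\tfrac12\ipd{\vec{\mathscr V}\cdot\nabs u,\chi}_\Gamma-\tfrac12\ipd{\vec{\mathscr V}\cdot\nabs\chi,u}_\Gamma-\tfrac12\ipd{\vec{\mathscr V}\cdot\vec\nu,u\varkappa\chi}_\Gamma .
\end{equation*}
Adding $\ipd{\vec{\mathscr V}\cdot\nabs u,\chi}_\Gamma$ to both sides, the left-hand side becomes exactly the left-hand side of \eqref{eq:anti}. On the right-hand side,
\begin{equation*}
\tfrac12\ipd{\vec{\mathscr V}\cdot\nabs u,\chi}_\Gamma-\tfrac12\ipd{\vec{\mathscr V}\cdot\nabs\chi,u}_\Gamma=\mathscr A_\Gamma(\vec{\mathscr V},u,\chi)
\end{equation*}
by \eqref{eq:antisym}, and the remaining term is $-\tfrac12\ipd{\vec{\mathscr V}\cdot\vec\nu,u\varkappa\chi}_\Gamma$. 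This is precisely \eqref{eq:anti}.

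The only delicate point is justifying the divergence formula for a vector field $\vec f$ that is not tangential, including its sign. To do this, split $\vec f=\vec f_T+(\vec f\cdot\vec\nu)\vec\nu$ into tangential and normal parts. The tangential divergence theorem on the closed surface gives $\int_\Gamma\nabs\cdot\vec f_T\,\dH^{d-1}=0$. For the normal part,
\begin{equation*}
\nabs\cdot\bigl((\vec f\cdot\vec\nu)\vec\nu\bigr)=(\vec f\cdot\vec\nu)\,\nabs\cdot\vec\nu=-\varkappa\,\vec f\cdot\vec\nu,
\end{equation*}
since $\nabs(\vec f\cdot\vec\nu)$ is tangential and hence orthogonal to $\vec\nu$. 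Combining the two parts gives the formula.

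For the regularity required, $\chi\in H^1(\Gamma)$ is handled by density: prove the identity for smooth $\chi$, then pass to the limit, using that $\varkappa$ and $\vec{\mathscr V}$ are sufficiently smooth on $\Gamma(t)$.
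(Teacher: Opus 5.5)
Your proof is correct. The paper gives no argument of its own and simply defers to \cite[Lemma 3.1]{GNZ26}. Your route, applying the closed-surface divergence theorem $\int_\Gamma \nabs\cdot\vec f\,\dH^{d-1} = -\int_\Gamma \varkappa\,\vec f\cdot\vec\nu\,\dH^{d-1}$ to $\vec f=\vec{\mathscr V}(\varkappa-\bkap)\chi$ and halving, is the standard integration-by-parts proof of that identity, and your observation $\nabs(\varkappa-\bkap)=\nabs\varkappa$ is the only adaptation needed for the spontaneous curvature.
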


The weak formulation for \eqref{eq:newpde} can now be stated as follows. Initially, we are given the surface $\Gamma(0)$ and the curvature $\varkappa(\cdot,0)\in H^1(\Gamma(0))$. Then for each $t\in(0,T]$, we seek $\Gamma(t)=\vec x(\Upsilon, t)$ with $\mathscr{\vv V}(\cdot, t)\in [H^1(\Gamma)]^d$, $\left(\mathscr{V}, \varkappa, \kappa\right) \in [H^{1}(\Gamma)]^3$, and $\left(\lambda(t), \mu(t)\right) \in \mathbb{R}^2$ such that
\begin{subequations}\label{eqn:weak}
\begin{align}
&\ipd{\mathscr{V}, \varphi}_{\Gamma} - \ipd{\nabla_{s} \varkappa, \nabla_{s} \varphi}_{\Gamma} + \ipd{[\varkappa-\bar{\varkappa}]\left|\nabla_{s} \vec{\nu}\right|^{2}, \varphi}_{\Gamma} \nn \\
&\qquad - \frac{1}{2}\ipd{[\varkappa-\bar{\varkappa}]^{2} \varkappa, \varphi}_{\Gamma} - \lambda\ipd{1, \varphi}_{\Gamma} - \mu\ipd{\kappa, \varphi}_{\Gamma} = 0  \qquad \forall \varphi \in H^{1}(\Gamma), \label{eq:weak1}\\[0.6em]
\label{eq:weak2}
&\ipd{\partial_{t}^{\circ} \varkappa, \chi}_{\Gamma} + \frac{1}{2}\ipd{\nabla_{s} \cdot \vec{\mathscr{V}}, [\varkappa-\bar{\varkappa}] \chi}_{\Gamma} - \mathscr{A}_{\Gamma}(\vec{\mathscr{V}}, \varkappa-\bar{\varkappa}, \chi) + \ipd{\nabla_{s} \mathscr{V}, \nabla_{s} \chi}_{\Gamma} \nn\\
&\qquad - \ipd{\mathscr{V}\left|\nabla_{s} \vec{\nu}\right|^{2}, \chi}_{\Gamma} + \frac{1}{2}\ipd{\mathscr{V}, [\varkappa-\bar{\varkappa}] \varkappa \chi}_{\Gamma} = 0  \qquad \forall \chi \in H^{1}(\Gamma), \\[0.6em]
\label{eq:weak3}
&\qquad \ipd{\mathscr{V}, 1}_{\Gamma} = 0,\\[0.6em]
\label{eq:weak4}
&\qquad \ipd{\mathscr{V}, \kappa}_{\Gamma} = 0,\\[0.6em]
\label{eq:weak5}
&\ipd{\mathscr{\vv V}\cdot \vec{\nu}, \xi}_{\Gamma} - \ipd{\mathscr{V}, \xi}_{\Gamma} = 0\qquad \forall \xi \in H^{1}(\Gamma), \\
\label{eq:weak6}
&\ipd{\kappa\,\vec{\nu}, \vec{\eta}}_{\Gamma} + \ipd{\nabla_{s} \vec\id, \nabla_{s} \vec{\eta}}_{\Gamma} = 0\qquad \forall \vec{\eta} \in \left[H^{1}(\Gamma)\right]^d,
\end{align}
\end{subequations}

The following theorem shows that the weak formulation satisfies the geometric structures of the flow.
\begin{thm}
The weak solution of \eqref{eqn:weak} satisfies the energy law      
\begin{equation}\label{eq:energylawweak}
    \frac{\mathrm{d}}{\mathrm{d}t} E_{\bkap}(\Gamma(t), \varkappa(t))+\bigl(\mathscr{V}, \mathscr{V}\bigr)_{\Gamma(t)}=0.
\end{equation} 
Moreover, the enclosed volume and surface area are conserved
\begin{equation}
   \vol(\Gamma(t))=\vol(\Gamma(0)), \quad |\Gamma(t)|=|\Gamma(0)|,\qquad  t \geq 0.
\end{equation}
\end{thm}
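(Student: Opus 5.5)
To prove the energy law I will differentiate $E_{\bkap}$ with the transport theorem for material derivatives,
\begin{equation*}
\ddt\frac12\int_{\Gt}(\varkappa-\bkap)^2\dH^{d-1}=\ipd{\partial_t^\circ\varkappa,\varkappa-\bkap}_{\Gamma}+\frac12\ipd{\nabs\cdot\vec{\mathscr V},(\varkappa-\bkap)^2}_{\Gamma},
\end{equation*}
and then test the weak equations to remove everything except $-\ipd{\mathscr V,\mathscr V}_\Gamma$.

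First, choose $\chi=\varkappa-\bkap$ in \eqref{eq:weak2}. This requires $\varkappa-\bkap\in H^1(\Gamma)$, which holds because $\bkap$ is a constant. The antisymmetric term vanishes, since $\mathscr{A}_\Gamma(\vec\eta,u,u)=0$ by \eqref{eq:antisym}. The first two terms of \eqref{eq:weak2} then reproduce exactly the time derivative of the energy above. This gives
\begin{equation*}
\ddt E_{\bkap}=-\ipd{\nabs\mathscr V,\nabs\varkappa}_\Gamma+\ipd{\mathscr V|\nabs\vec\nu|^2,\varkappa-\bkap}_\Gamma-\frac12\ipd{\mathscr V,(\varkappa-\bkap)^2\varkappa}_\Gamma .
\end{equation*}
Next, choose $\varphi=\mathscr V$ in \eqref{eq:weak1}. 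The three curvature terms appear there with opposite signs, so substituting yields
\begin{equation*}
\ddt E_{\bkap}=-\ipd{\mathscr V,\mathscr V}_\Gamma+\lambda\ipd{1,\mathscr V}_\Gamma+\mu\ipd{\kappa,\mathscr V}_\Gamma .
\end{equation*}
The constraints \eqref{eq:weak3} and \eqref{eq:weak4} kill the last two terms, which gives \eqref{eq:energylawweak}.

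For the conservation properties I will use the standard geometric identities
\begin{equation*}
\ddt\vol(\Gt)=\int_{\Gt}\vec{\mathscr V}\cdot\vec\nu\,\dH^{d-1},\qquad \ddt|\Gt|=\int_{\Gt}\nabs\cdot\vec{\mathscr V}\,\dH^{d-1}=\ipd{\nabs\vec\id,\nabs\vec{\mathscr V}}_\Gamma .
\end{equation*}
The area identity should be stated for a general (not purely normal) velocity, since the tangential part is arbitrary here. For volume, take $\xi=1$ in \eqref{eq:weak5} to get $\ddt\vol(\Gt)=\ipd{\mathscr V,1}_\Gamma$, which is zero by \eqref{eq:weak3}. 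For area, take $\vec\eta=\vec{\mathscr V}$ in \eqref{eq:weak6}, giving $\ddt|\Gt|=-\ipd{\kappa,\vec{\mathscr V}\cdot\vec\nu}_\Gamma$. Then apply \eqref{eq:weak5} with $\xi=\kappa$ to obtain $\ddt|\Gt|=-\ipd{\kappa,\mathscr V}_\Gamma$, which is zero by \eqref{eq:weak4}. Integrating in time gives the claimed conservation.

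The main obstacle is bookkeeping rather than substance, and it sits in two places. The first is using the transport theorem for a general material velocity $\vec{\mathscr V}$, where the tangential part also contributes through $\nabs\cdot\vec{\mathscr V}$. The factor $\frac12$ and $(\varkappa-\bkap)$ in the second term of \eqref{eq:weak2} are designed precisely so that this matches. The second is justifying the test functions $\varphi=\mathscr V$, $\chi=\varkappa-\bkap$, $\xi=\kappa$, $\vec\eta=\vec{\mathscr V}$, which are admissible by the stated $H^1$ regularity. One caveat: the area identity $\ipd{\nabs\vec\id,\nabs\vec\eta}_\Gamma=\int_\Gamma\nabs\cdot\vec\eta\,\dH^{d-1}$ is the first variation of area and holds pointwise because $\nabs\vec\id$ is the tangential projection.
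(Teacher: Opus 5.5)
Your proposal is correct and follows essentially the same route as the paper. You use the transport theorem for the energy, test \eqref{eq:weak1} with $\mathscr V$ and \eqref{eq:weak2} with $\varkappa-\bkap$, and use \eqref{eq:weak5}--\eqref{eq:weak6} with $\xi=1,\kappa$ and $\vec\eta=\vec{\mathscr V}$ for the conservation properties. Your bookkeeping is in fact slightly more careful than the paper's: you explicitly invoke \eqref{eq:weak3}--\eqref{eq:weak4} to remove the multiplier terms, and you keep the correct sign $\ddt|\Gt| = -\ipd{\kappa,\mathscr V}_\Gamma$, where the paper drops the minus sign (harmlessly, since the quantity vanishes).
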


\begin{proof} 
Using \eqref{eq:transport} gives
\begin{equation}\label{eq:energy_transport_identity}
    \frac{1}{2}\ddt\ipd{[\varkappa-\bkap]^2,~1}_{\Gt} = \ipd{\partial_t^\circ\varkappa,~\varkappa-\bkap}_{\Gt} +\frac{1}{2}\ipd{\nabs\cdot\mathscr{\vv V},~(\varkappa-\bkap)^2}_{\Gt}.
\end{equation}
Now choosing $\varphi = \mathscr{V}$ in \eqref{eq:weak1} and $\chi=\varkappa-\bkap$ in \eqref{eq:weak2}, recalling \eqref{eq:weak5}--\eqref{eq:weak6}, as well as the antisymmetric term in \eqref{eq:antisym}, we obtain
\begin{equation}
\bigl(\mathscr{V},~\mathscr{V}\bigr)_{\Gt} + \bigl(\partial_t^\circ\varkappa,~\varkappa-\bkap\bigr)_{\Gt} +\frac{1}{2}\Bigl(\nabs\cdot\mathscr{\vv V},~(\varkappa-\bkap)^2\Bigr)_{\Gt}=0,
\end{equation}
which, together with \eqref{eq:energy_transport_identity}, proves \eqref{eq:energylawweak}. 

It follows from the transport theorem 
\begin{equation}\label{eq:transp}
\ddt\vol(\Gamma(t)) =\ipd{\mathscr{V}, 1}_{\Gamma} = 0, 
\end{equation}
recalling \eqref{eq:weak3}. Using \eqref{eq:transport} again and the identity $\nabs\mathscr{\vv V} = \nabs\vec\id: \nabs\mathscr{\vv V}$, we have 
\begin{equation*}
\ddt|\Gamma(t)| = \ipd{\nabs\vec\id, \nabs\mathscr{\vv V}}_{\Gamma}.
\end{equation*}
We next set $\xi = \kappa$ in \eqref{eq:weak5} and $\vec\eta = \mathscr{\vv V}$ in \eqref{eq:weak6}, and recall \eqref{eq:weak4} to obtain that
\begin{equation}
\ddt|\Gamma(t)| = \ipd{\nabs\vec\id, \nabs\mathscr{\vv V}}_{\Gamma} = \ipd{\mathscr{V},~\kappa}_{\Gamma} = 0, 
\end{equation}
which implies the surface area preservation. 
\end{proof}

\begin{rem}
The proof also explains why the geometric curvature $\kappa$ is used to
  enforce surface area preservation, whereas the evolution curvature is used for the energy-stability estimate. 
\end{rem}

\section{Parametric finite element approximations}\label{sec:pfem}

We employ a uniform partition of the time interval $[0,T]=\cup_{m=1}^M[t_{m-1}, t_m]$, with $t_m = m\ttau$ and time step size $\ttau = \frac{T}{M}$. The evolving closed hypersurface $\Gamma(t_m)\subset\bR^d$ ($d=2,3$) is approximated by a $(d-1)$-dimensional polyhedral surface $\Gamma^m$:
\begin{equation}
 	\Gamma^m := \bigcup_{j=1}^{J}\overline{\sigma_j^m}, \quad \text{with} \quad \mT^m=\{\sigma_j^m\}_{j=1}^J \quad \text{and} \quad \mQ^m=\{\vec q_k^m\}_{k=1}^{K}, \label{eq:GammaD}
\end{equation}
where $\mT^m$ is a collection of mutually disjoint open $(d-1)$-simplices (line segments for $d=2$ and triangles for $d=3$), and $\mQ^m$ is the set of globally labeled vertices. 

For each element $\sigma_j^m \in \mT^m$, let $\{\vec q_{j_k}^m\}_{k=1}^d$ be its vertices, ordered to ensure a consistent outward orientation. To unify the geometric description for $d\in\{2,3\}$, we define the unnormalized outward normal vector $\vec{N}(\sigma_j^m)$ as follows:
\begin{equation}\label{eq:J_vector}
    \vec{N}(\sigma_j^m) := 
    \begin{cases}
        (\vec q_{j_2}^m - \vec q_{j_1}^m)^\bot, & \text{if } d=2, \\
        (\vec q_{j_2}^m - \vec q_{j_1}^m) \times (\vec q_{j_3}^m - \vec q_{j_1}^m), & \text{if } d=3,
    \end{cases}
\end{equation}
where $(\cdot)^\perp$ represents a clockwise rotation by $\frac{\pi}{2}$ in $\bR^2$. Accordingly, the discrete unit normals $\vec\nu^m$ on $\Gamma^m$ are computed element-wise by
\begin{equation}\label{eq:vG}
    \vec{\nu}^m|_{\sigma^{m}_j} := \frac{\vec{N}(\sigma_j^m)}{|\vec{N}(\sigma_j^m)|},\qquad\forall\sigma_j^m\in\mT^m.
\end{equation}
We also follow \cite{BGN08parametric, BGN08willmore} and introduce the vertex normal $\vec\omega^{m}\in [\bV^h(\Gm)]^d$ as the mass-lumped $L^2$-projection of the face normal $\vec\nu^m$ onto $[\bV^h(\Gm)]^d$:
\begin{equation} \label{eq:nuhomegah}
    \ipd{\vec\omega^m, \vec\eta^h}_{\Gm}^h = \ipd{\vec\nu^m, \vec\eta^h}_{\Gm} \quad \forall\vec\eta^h\in [\bV^h(\Gm)]^d.
\end{equation}
It naturally follows that $\ipd{\chi\,\vec\omega^m, \vec\eta^h}_{\Gm}^h = \ipd{\chi\,\vec\nu^m, \vec\eta^h}_{\Gm}^h$ holds for any $\chi\in \bV^h(\Gm)$ and $\vec\eta^h\in [\bV^h(\Gm)]^d$.

Associated with $\Gamma^m$, we introduce the finite element space 
\[\bV^h(\Gm):=\bigl\{\varphi\in C(\Gm): \varphi|_{\sigma}\quad\mbox{is affine}\quad\forall\sigma\in\mT^m\bigr\}.\]
To approximate the inner product $\ipd{\cdot,\cdot}_{\Gm}$, we introduce the mass-lumped approximation over the current polyhedral surface $\Gamma^m$ via 
\begin{equation}
\ipd{u, v}_{\Gm}^h:=  
\frac{1}{d}\sum_{j=1}^{J}|\sigma^{m}_j|
\sum_{k=0}^{d-1} 
\underset{\sigma^{m}_j\ni \vec{p}\to \vec{q}^{m}_{j_k}}{\lim}\, 
(u\cdot v)(
\vec{p}),\label{eq:tprule}
\end{equation}
where $u,v$ are piecewise continuous, with possible jumps
across the edges of $\sigma\in\mT^m$, and 
$|\sigma^{m}_j|= \frac{1}{(d-1)!}\,|\vec{N}(\sigma_j^{m})|$ 
is the measure of $\sigma^{m}_j$.

We next present our structure-preserving parametric finite element approximation for the Helfrich flow, which consists of two parts: the approximation of the gradient-flow structure and the application of the computed normal velocity together with the BGN tangential velocity.

\subsection{Gradient flow approximations} 

To construct an unconditionally energy-stable fully discrete scheme for the Helfrich flow, we adopt the arbitrary Lagrangian-Eulerian (ALE) parametric finite element framework recently introduced in \cite{GNZ26}. 

Given the polyhedral surfaces $\{\Gamma^\ell\}_{\ell\leq m}$, we first define the discrete vertex velocity $\mathscr{\vv V}^m\in[\bV^h(\Gm)]^d$ and the discrete pullback mapping $\vec\Phi^m\in[\bV^h(\Gm)]^d$ as
\begin{equation} \label{eq:Vm_and_mapping}
    \mathscr{\vv V}^m(\vec q_k^m) = \frac{\vec q_k^{m} - \vec q_k^{m-1}}{\ttau}, \qquad \vec\Phi^m = \vec\id|_{\Gm} - \ttau\,\mathscr{\vv V}^m,
\end{equation}
so that $\vec\Phi^m(\Gamma^m) = \Gamma^{m-1}$. Then we have the following change-of-variables formula 
\begin{equation} \label{eq:change_of_variables}
\int_{\Gamma^{m-1}} f \,\dH^{d-1} = \int_{\Gamma^m} f \circ \vec{\Phi}^m \mathcal{J}^m \,\dH^{d-1} \qquad \forall f \in L^1(\Gamma^{m-1}),
\end{equation}
where $\mathcal{J}^m=\sqrt{{\rm det}(\mathcal{L}^m)}$ is the surface Jacobian determinant of the map $\vec\Phi^m$, and $\mathcal{L}^m =[\nabs\vec\Phi^m(\vec z)]^T\nabs\vec\Phi^m(\vec z) $ is a linear operator on the tangent space $T_{\vec z}\Gm$:
\[\mathcal{L}^m(\vec z): T_{\vec z}\Gm\mapsto T_{\vec z}\Gm.\]

Denote by $\varkappa^m$, $\mathscr{V}^m$, and $\kappa^m$ the numerical approximations of $\varkappa(\cdot,t_m)$, $\mathscr{V}(\cdot,t_m)$, and $\kappa(\cdot,t_m)$, respectively, defined on $\Gamma^{m-1}$. To construct a linear, unconditionally stable scheme, we employ a special treatment of the
  first two terms in \eqref{eq:weak2}, following \cite{GNZ26}. Recalling the definition of the velocity in \eqref{eq:velocity}, we obtain the following lemma, which provides a consistent temporal discretization of these two terms.

\begin{lem}[Metric-bundled time derivative]\label{lem:metric_bundling}
Let $\varkappa_{\Gamma^m}^m = \varkappa^m \circ \vec{\Phi}^m$ be the pullback of the previous mean curvature onto the current polyhedral mesh $\Gamma^m$ and assume $\nabs\mathscr{\vv V}$ remains uniformly bounded during the evolution. Then the following consistency relation holds:
\begin{align}\label{eq:metric_bundling}
    &\left(\frac{(\varkappa^{m+1}-\bkap) - (\varkappa_{\Gamma^m}^m-\bkap) \sqrt{\mathcal{J}^m}}{\ttau}, \chi^h\right)_{\Gamma^m} \nn\\
    &= \left(\frac{\varkappa^{m+1}-\varkappa_{\Gamma^m}^m}{\ttau}, \chi^h\right)_{\Gamma^m}+ \frac{1}{2}\left(\nabla_s \cdot \mathscr{\vv V}^m, (\varkappa_{\Gamma^m}^m-\bkap)\chi^h\right)_{\Gamma^m} + \mathcal{O}(\ttau),\quad \forall\chi^h \in \bV^h(\Gamma^m).
\end{align}
\end{lem}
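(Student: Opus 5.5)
The identity is purely algebraic apart from one Taylor expansion of $\sqrt{\mathcal{J}^m}$ in $\ttau$. I will first split the left-hand side by adding and subtracting $(\varkappa^m_{\Gamma^m}-\bkap)$:
\begin{equation*}
\frac{(\varkappa^{m+1}-\bkap) - (\varkappa_{\Gamma^m}^m-\bkap) \sqrt{\mathcal{J}^m}}{\ttau}
= \frac{\varkappa^{m+1}-\varkappa_{\Gamma^m}^m}{\ttau} + (\varkappa_{\Gamma^m}^m-\bkap)\,\frac{1-\sqrt{\mathcal{J}^m}}{\ttau}.
\end{equation*}
The first term reproduces the first term on the right of \eqref{eq:metric_bundling}. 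It then suffices to show
\begin{equation*}
\frac{1-\sqrt{\mathcal{J}^m}}{\ttau} = \frac12\,\nabs\cdot\mathscr{\vv V}^m + \mathcal{O}(\ttau)
\end{equation*}
pointwise on each simplex of $\Gamma^m$, uniformly in space. Pairing this with $(\varkappa^m_{\Gamma^m}-\bkap)\chi^h$ and integrating then gives the claim, with the remainder controlled by $\ttau$ times $\|(\varkappa^m_{\Gamma^m}-\bkap)\chi^h\|_{L^1(\Gamma^m)}$.

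For the expansion of the Jacobian, I use $\vec\Phi^m=\vec\id-\ttau\mathscr{\vv V}^m$, so $\nabs\vec\Phi^m = \nabs\vec\id - \ttau\nabs\mathscr{\vv V}^m$, where $\nabs\vec\id$ is the tangential projection $P$. On the tangent space this gives
\begin{equation*}
\mathcal{L}^m = P - \ttau\left(P\nabs\mathscr{\vv V}^m + (\nabs\mathscr{\vv V}^m)^TP\right) + \ttau^2(\nabs\mathscr{\vv V}^m)^T\nabs\mathscr{\vv V}^m,
\end{equation*}
which is a perturbation $I+\ttau B+\ttau^2C$ of the identity on $T_{\vec z}\Gm$. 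Expanding the determinant yields $\det\mathcal{L}^m = 1 - 2\ttau\,{\rm tr}(P\nabs\mathscr{\vv V}^m) + \mathcal{O}(\ttau^2)$, and ${\rm tr}(P\nabs\mathscr{\vv V}^m)=\nabs\cdot\mathscr{\vv V}^m$. Hence $\mathcal{J}^m = 1-\ttau\nabs\cdot\mathscr{\vv V}^m+\mathcal{O}(\ttau^2)$. Because the leading term is $1$, the factor $\sqrt{\mathcal{J}^m}$ is smooth there, and $\sqrt{\mathcal{J}^m} = 1-\frac12\ttau\nabs\cdot\mathscr{\vv V}^m + \mathcal{O}(\ttau^2)$, which is the required expansion.

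The main obstacle is making the $\mathcal{O}(\cdot)$ terms uniform. The constants in the determinant and square-root expansions depend only on bounds for $|\nabs\mathscr{\vv V}^m|$, which is why the statement assumes $\nabs\mathscr{\vv V}$ stays uniformly bounded. Under that assumption $\ttau|\nabs\mathscr{\vv V}^m|$ is small for small $\ttau$, so $\mathcal{J}^m$ stays bounded away from zero and the square-root expansion is legitimate. The error constant is then independent of the element and of $m$. I will also note that $\varkappa^m_{\Gamma^m}$ and $\chi^h$ are bounded in $L^2(\Gamma^m)$, so the remainder integral is indeed $\mathcal{O}(\ttau)$.
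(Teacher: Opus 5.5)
Your proposal is correct and follows essentially the same route as the paper: you split off $(\varkappa^{m+1}-\varkappa^m_{\Gamma^m})/\ttau$, insert the expansion $\sqrt{\mathcal{J}^m}=1-\frac12\ttau\,\nabs\cdot\mathscr{\vv V}^m+\mathcal{O}(\ttau^2)$, and integrate against $\chi^h$. The only difference is that the paper cites this Jacobian expansion from \cite[Lemma~4.1]{GNZ26}, whereas you derive it directly from $\det(I+\ttau B+\ttau^2 C)=1+\ttau\,{\rm tr}\,B+\mathcal{O}(\ttau^2)$ on the tangent space, correctly accounting for $\mathcal{J}^m=\sqrt{\det\mathcal{L}^m}$, so that its square root carries the factor $\frac12$.
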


\begin{proof}
We first recall \cite[Lemma~4.1]{GNZ26} that 
for a sufficiently small time step $\ttau$, it holds that
\begin{equation}\label{eq:Jmexp}
	\sqrt{\mathcal{J}^m} = 1 - \frac{1}{2}\ttau\,\nabs\cdot \mathscr{\vv V}^m + \mathcal{O}(\ttau^2).
\end{equation}
Now we substitute \eqref{eq:Jmexp} into the left-hand side of \eqref{eq:metric_bundling} and apply the Taylor expansion to obtain 
\begin{align*}
    &\frac{(\varkappa^{m+1}-\bkap) - (\varkappa_{\Gamma^m}^m-\bkap) \sqrt{\mathcal{J}^m}}{\ttau} \\
    &\qquad = \frac{(\varkappa^{m+1}-\bkap) - (\varkappa_{\Gamma^m}^m-\bkap)\left[ 1 - \frac{1}{2}\ttau \nabla_s \cdot \mathscr{\vv V}^m + \mathcal{O}(\ttau^2) \right]}{\ttau} \\
    &\qquad = \frac{\varkappa^{m+1} - \varkappa_{\Gamma^m}^m}{\ttau} + \frac{1}{2}(\varkappa_{\Gamma^m}^m-\bkap) \nabla_s \cdot \mathscr{\vv V}^m + \mathcal{O}(\ttau).
\end{align*}
Multiplying by the test function $\chi^h$ and integrating over $\Gamma^m$ yields the desired result \eqref{eq:metric_bundling}. 
\end{proof}

We are now ready to present a linear and unconditionally stable scheme for approximating the gradient-flow structure of the Helfrich flow. Given an initial admissible polyhedral surface $\Gamma^0$, we set $\Gamma^{-1}=\Gamma^0$ and $\mathscr{\vv V}^0 = \vec{0}$. We also assume we have the initial curvatures $(\varkappa^0, \kappa^0)\in[\bV^h(\Gm)]^2$. For $m \geq 0$, we first introduce an explicit approximation of $|\nabs\vec\nu|^2$ by setting
\begin{equation} \label{eq:Wm}
    \mathcal{W}^m = | \nabs\vec v^m|^2\qquad\mbox{with}\quad \vec v^m(\vec q) = \frac{\vec\omega^m(\vec q)}{|\vec\omega^m(\vec q)|},
\end{equation}
where $\vec\omega^m$ is the vertex normal defined in \eqref{eq:nuhomegah}, and $\vec v^m$ is the normalized vertex normal. Then we find $\left(\mathscr{V}^{m+1}, \varkappa^{m+1}\right) \in [\bV^h(\Gamma^m)]^2$ and the Lagrange multipliers $\left(\lambda^{m+1},\mu^{m+1} \right)\in\bR^2$ such that
\begin{subequations}
\label{eqn:fd}
\begin{align}
    &\ipd{\mathscr{V}^{m+1}, \varphi^h}_{{\Gm}} - \bigl(\nabs\varkappa^{m+1}, \nabs\varphi^h\bigr)_{\Gm} + \ipd{\mathcal{W}^m\,[\varkappa^{m+1}-\bkap], \varphi^h}_{\Gm} \nn\\
    &\qquad - \frac{1}{2}\ipd{[\varkappa^m_{{\Gamma^m}}-\bkap]\,\varkappa_{\Gamma^m}^m\,[\varkappa^{m+1}-\bkap], \varphi^h}_{\Gm} \nn\\
    &\qquad - \lambda^{m+1}\ipd{1, \varphi^{h}}_{\Gamma^{m}} - \mu^{m+1}\ipd{\kappa^{m}, \varphi^{h}}_{\Gamma^{m}} = 0 \qquad\forall\varphi^h\in\bV^h(\Gm),\label{eq:fd1}\\[0.5em]
    &\ipd{\frac{\varkappa^{m+1}-\bkap - (\varkappa_{\Gamma^m}^m-\bkap)\,\sqrt{\mathcal{J}^m}}{\ttau}, \chi^h}_{\Gm} -\mathscr{A}_{\Gm}(\mathscr{\vv V}^m, \varkappa^{m+1}-\bkap, \chi^h) \nn\\
    &\qquad + \ipd{\nabs\mathscr{V}^{m+1}, \nabs\chi^h}_{\Gm} - \ipd{\mathcal{W}^m\,\mathscr{V}^{m+1}, \chi^h}_{\Gm} \nn\\
    &\qquad + \frac{1}{2}\ipd{\mathscr{V}^{m+1}, [\varkappa^m_{\Gamma^m}-\bkap]\,\varkappa_{\Gamma^m}^m\,\chi^h}_{\Gm} = 0\qquad\forall\chi^h\in\bV^h(\Gm);\label{eq:fd2} \\[0.5em]
    &  \ipd{\mathscr{V}^{m+1}, 1}_{\Gamma^{m}} = 0, \label{eq:fd3}\\
    & \ipd{\mathscr{V}^{m+1} \kappa^{m}, 1}_{\Gamma^{m}} = 0. \label{eq:fd4}
\end{align} 
\end{subequations}

We have the following theorem, which shows that the linear system \eqref{eqn:fd} admits a unique solution. 

\begin{thm}[well-posedness]
Assume that 
\begin{enumerate}[label=$(\mathbf{A \arabic*})$, ref=$\mathbf{A \arabic*}$] 
\item \label{assumpI}  The polyhedral surface satisfies 
\[|\sigma|>0\qquad\forall\sigma\in\mT^m;\]
\item \label{assumpII} The geometric curvature is not a constant, i.e., $\kappa^m\not\equiv c$ for any constant $c$.
\end{enumerate}
Then the linear system in \eqref{eqn:fd} admits a unique solution
\[
(\mathscr{V}^{m+1}, \varkappa^{m+1}, \lambda^{m+1}, \mu^{m+1})
\in \bV^h(\Gamma^m) \times \bV^h(\Gamma^m) \times \bR \times \bR.
\]
\end{thm}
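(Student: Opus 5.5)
Since \eqref{eqn:fd} is a square linear system in finitely many unknowns, I will show that the homogeneous system has only the trivial solution. The homogeneous problem is obtained by dropping every term that carries data: $\bkap$, $\varkappa^m_{\Gamma^m}$ in the time-derivative term, and the constant $\bkap$ inside $\mathscr{A}$. For that problem I write $\varkappa^{m+1}-\bkap$ simply as $\varkappa$; after homogenization $\bkap$ enters only through the explicit coefficient $g:=(\varkappa^m_{\Gamma^m}-\bkap)\varkappa^m_{\Gamma^m}$. So let $(\mathscr{V},\varkappa,\lambda,\mu)\in[\bV^h(\Gm)]^2\times\bR^2$ satisfy
\begin{align*}
&(\mathscr{V},\varphi^h)_{\Gm}-(\nabs\varkappa,\nabs\varphi^h)_{\Gm}+(\mathcal{W}^m\varkappa,\varphi^h)_{\Gm}-\tfrac12(g\,\varkappa,\varphi^h)_{\Gm}-\lambda(1,\varphi^h)_{\Gm}-\mu(\kappa^m,\varphi^h)_{\Gm}=0,\\
&\tfrac{1}{\ttau}(\varkappa,\chi^h)_{\Gm}-\mathscr{A}_{\Gm}(\mathscr{\vv V}^m,\varkappa,\chi^h)+(\nabs\mathscr{V},\nabs\chi^h)_{\Gm}-(\mathcal{W}^m\mathscr{V},\chi^h)_{\Gm}+\tfrac12(\mathscr{V},g\,\chi^h)_{\Gm}=0,
\end{align*}
together with $(\mathscr{V},1)_{\Gm}=0$ and $(\mathscr{V},\kappa^m)_{\Gm}=0$.

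The key step mimics the discrete energy estimate. I choose $\varphi^h=\mathscr{V}$ in the first equation and $\chi^h=\varkappa$ in the second, then add them. The gradient coupling terms $\mp(\nabs\varkappa,\nabs\mathscr{V})$ cancel. The $\mathcal{W}^m$ terms cancel, and so do the $g$ terms, because they appear with opposite signs and are symmetric in $(\mathscr{V},\varkappa)$. The antisymmetric form vanishes on the diagonal, since $\mathscr{A}_{\Gm}(\vec\eta,u,u)=0$ by \eqref{eq:antisym}. The multiplier terms vanish as well: $\lambda(1,\mathscr{V})=0$ and $\mu(\kappa^m,\mathscr{V})=0$ by the two constraint equations. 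This leaves
\[
(\mathscr{V},\mathscr{V})_{\Gm}+\tfrac{1}{\ttau}(\varkappa,\varkappa)_{\Gm}=0,
\]
so $\mathscr{V}=\varkappa=0$, using assumption (\ref{assumpI}) to guarantee that the $L^2$ norms on $\Gm$ are genuine norms on $\bV^h(\Gm)$. The one point to double-check is that all these inner products are taken consistently (all exact or all mass-lumped), so that the cancellations are exact. As written in \eqref{eqn:fd} they are all exact $L^2$ products on $\Gm$, so the cancellations hold.

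Inserting $\mathscr{V}=\varkappa=0$ into the first equation gives $\lambda(1,\varphi^h)_{\Gm}+\mu(\kappa^m,\varphi^h)_{\Gm}=0$ for all $\varphi^h\in\bV^h(\Gm)$. Since $\lambda+\mu\kappa^m\in\bV^h(\Gm)$, I take $\varphi^h=\lambda+\mu\kappa^m$ to get $\lambda+\mu\kappa^m\equiv0$ on $\Gm$. If $\mu\neq0$, then $\kappa^m\equiv-\lambda/\mu$ is constant, which contradicts (\ref{assumpII}). Hence $\mu=0$, and then $\lambda=0$. So the homogeneous system has only the trivial solution, and the square system \eqref{eqn:fd} is uniquely solvable.

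I expect the main obstacle to be only the bookkeeping: checking that every coupling term cancels exactly in the sum, and confirming that the system is square (two test spaces of dimension $K$ plus two scalar constraints, matching $2K+2$ unknowns). No coercivity argument beyond the positive diagonal terms is needed.
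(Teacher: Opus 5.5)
Your proof is correct and follows the paper's argument. You test the homogeneous system with $\varphi^h=\mathscr{V}$ and $\chi^h=\varkappa$ (the paper also scales by $\ttau$, which makes no difference), so the coupling, antisymmetric and multiplier terms all cancel, and your choice $\varphi^h=\lambda+\mu\kappa^m$ is just the quadratic-form version of the paper's $2\times 2$ Gram-matrix invertibility argument under (A2). Your homogenized coefficient $g=(\varkappa^m_{\Gamma^m}-\bkap)\varkappa^m_{\Gamma^m}$ is in fact the correct one for $\bkap\neq 0$, whereas the paper writes $(\varkappa^m_{\Gamma^m})^2$; this is harmless, since those terms cancel either way.
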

\begin{proof}
Since the linear system is finite-dimensional and the number of unknowns matches the number of equations, the Fredholm alternative implies that it suffices to prove that the corresponding homogeneous system admits only the trivial zero solution. We thus consider the corresponding homogeneous system, which is given by finding $(\mathscr{V}, \varkappa)\in[\bV^h(\Gm)]^2$ and $(\lambda, \mu)\in\bR^2$ such that
\begin{subequations}\label{eq:homo_stage1}
\begin{align}
    &\ipd{\mathscr{V}, \varphi^h}_{{\Gm}} - \ipd{\nabs\varkappa, \nabs\varphi^h}_{\Gm} + \ipd{\mathcal{W}^m\,\varkappa, \varphi^h}_{\Gm} \nn\\
    &\qquad - \frac{1}{2}\ipd{(\varkappa^m_{{\Gamma^m}})^2\,\varkappa, \varphi^h}_{\Gm} - \lambda \ipd{1, \varphi^h}_{\Gamma^m} - \mu \ipd{\kappa^m, \varphi^h}_{\Gamma^m} = 0\qquad\forall\varphi^h\in\bV^h(\Gm); \label{eq:homo1} \\[0.5em]
    &\ipd{\frac{\varkappa}{\ttau}, \chi^h}_{\Gm} - \mathscr{A}_{\Gm}(\mathscr{\vv V}^m, \varkappa, \chi^h) + \ipd{\nabs\mathscr{V}, \nabs\chi^h}_{\Gm} \nn\\
    &\qquad - \ipd{\mathcal{W}^m\,\mathscr{V}, \chi^h}_{\Gm} + \frac{1}{2}\ipd{\mathscr{V}, (\varkappa^m_{\Gamma^m})^2\,\chi^h}_{\Gm} = 0\qquad\forall\chi^h\in\bV^h(\Gm);\label{eq:homo2} \\[0.5em]
    &\ipd{\mathscr{V}, 1}_{\Gamma^m} = 0;\label{eq:homo3} \\[0.5em]
    &\ipd{\mathscr{V}, \kappa^m}_{\Gamma^m} = 0.\label{eq:homo4}
\end{align}
\end{subequations}
We then choose $\varphi^h = \ttau\mathscr{V}$ in \eqref{eq:homo1}, $\chi^h = \varkappa$ in \eqref{eq:homo2}, multiply \eqref{eq:homo2} by $\ttau$, and add the two equations. The antisymmetric term vanishes by skew-symmetry, and the Lagrange multiplier terms vanish due to \eqref{eq:homo3}--\eqref{eq:homo4}. This gives
\begin{equation*}
    \ttau\ipd{\mathscr{V}, ~\mathscr{V}}_{\Gm} + \ipd{\varkappa,~\varkappa}_{\Gamma^m} = 0.
\end{equation*}
This immediately implies $\mathscr{V} \equiv 0$ and $\varkappa \equiv 0$ on $\Gamma^m$.

Substituting $\mathscr{V} = 0$ and $\varkappa = 0$ back into \eqref{eq:homo1} yields the orthogonality condition for the multipliers:
\begin{equation}\label{eq:multiplier_system}
    \lambda \ipd{1, \varphi^h}_{\Gamma^m} + \mu \ipd{\kappa^m, \varphi^h}_{\Gamma^m} = 0 \quad \forall \varphi^h \in \bV^h(\Gamma^m).
\end{equation}
Since $\Gamma^m$ is closed, choosing $\varphi^h = 1$ and $\varphi^h = \kappa^m$ leads to the $2 \times 2$ Gram matrix system:
\begin{equation}\label{eq:gram_matrix}
    \begin{pmatrix}
        \ipd{1, 1}_{\Gamma^m} & \ipd{1, \kappa^m}_{\Gamma^m} \\[0.5em]
        \ipd{\kappa^m, 1}_{\Gamma^m} & \ipd{\kappa^m, \kappa^m}_{\Gamma^m}
    \end{pmatrix}
    \begin{pmatrix}
        \lambda \\ \mu
    \end{pmatrix}
    =
    \begin{pmatrix}
        0 \\ 0
    \end{pmatrix}.
\end{equation}
By assumption~\ref{assumpII}, the functions $1$ and $\kappa^m$ are linearly independent in $L^2(\Gamma^m)$. Hence the Gram matrix is invertible, which leads to $\lambda = 0$ and $\mu = 0$. 

This shows that the corresponding homogeneous system has only the trivial zero solution. Thus the linear system \eqref{eqn:fd} has a unique solution. 
\end{proof}

\begin{rem}[The degenerate case]
If assumption~\ref{assumpII} is violated, i.e., if $\kappa^m \equiv c$ for a constant $c$, then the constraints in \eqref{eq:fd3} and \eqref{eq:fd4} become linearly dependent. To remove this degeneracy, we may simply set $\mu=0$, so that the resulting reduced system remains uniquely solvable.
\end{rem}

The next theorem establishes the unconditional energy stability of the method. We define the discrete Helfrich bending energy at time $t_m$ by
\begin{equation}\label{eq:discrete_energy}
\mathcal{E}(\Gamma^{m-1}, \varkappa^m)
  := \frac{1}{2} \int_{\Gamma^{m-1}} (\varkappa^m - \bar{\varkappa})^2 \,\dH^{d-1}.
\end{equation}
  
\begin{thm}[unconditional energy stability]\label{thm:stability}

Let $(\mathscr{V}^{m+1}, \varkappa^{m+1})\in[\bV^h(\Gm)]^2$ be a solution to \eqref{eqn:fd}. Then for any time step size $\ttau > 0$, the following energy dissipation inequality holds:
\begin{equation} \label{eq:energy_dissipation}
\mathcal{E}(\Gamma^{m}, \varkappa^{m+1}) + \ttau\norm{\mathscr{V}^{m+1}}^2_{\Gm} \le \mathcal{E}(\Gamma^{m-1}, \varkappa^m),
\end{equation}
where $\norm{\cdot}_{\Gm}$ is the norm induced by the inner product $\ipd{\cdot, \cdot}_{\Gm}$.
\end{thm}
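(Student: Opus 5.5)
Our plan is to mimic the continuous energy argument of the weak formulation at the fully discrete level. We take $\varphi^h=\ttau\,\mathscr{V}^{m+1}$ in \eqref{eq:fd1} and $\chi^h=\ttau\,(\varkappa^{m+1}-\bkap)$ in \eqref{eq:fd2}, and add the two equations. We expect the following cancellations. The gradient terms $\ipd{\nabs\varkappa^{m+1},\nabs\mathscr{V}^{m+1}}_{\Gm}$ cancel, since $\nabs(\varkappa^{m+1}-\bkap)=\nabs\varkappa^{m+1}$. The $\mathcal{W}^m$ terms cancel, as do the cubic terms $\frac12\ipd{(\varkappa^m_{\Gm}-\bkap)\varkappa^m_{\Gm}(\varkappa^{m+1}-\bkap),\mathscr{V}^{m+1}}_{\Gm}$. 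This is exactly why these terms were discretized linearly in $\varkappa^{m+1}$ and $\mathscr{V}^{m+1}$ with identical explicit coefficients. The antisymmetric term $\mathscr{A}_{\Gm}(\mathscr{\vv V}^m,u,u)$ vanishes by \eqref{eq:antisym}. The multiplier terms drop out by \eqref{eq:fd3}--\eqref{eq:fd4}. What remains is
\begin{equation*}
\ttau\norm{\mathscr{V}^{m+1}}^2_{\Gm}+\ipd{(\varkappa^{m+1}-\bkap)-(\varkappa^m_{\Gm}-\bkap)\sqrt{\mathcal{J}^m},\,\varkappa^{m+1}-\bkap}_{\Gm}=0 .
\end{equation*}

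Next we bound the second term from below. Write $a=\varkappa^{m+1}-\bkap$ and $b=(\varkappa^m_{\Gm}-\bkap)\sqrt{\mathcal{J}^m}$. Pointwise, $(a-b)a=\frac12a^2-\frac12b^2+\frac12(a-b)^2\ge\frac12a^2-\frac12b^2$. Hence
\begin{equation*}
\ipd{a-b,a}_{\Gm}\ge \mathcal{E}(\Gm,\varkappa^{m+1})-\frac12\int_{\Gm}(\varkappa^m\circ\vec\Phi^m-\bkap)^2\mathcal{J}^m\,\dH^{d-1}.
\end{equation*}
By the change-of-variables formula \eqref{eq:change_of_variables} applied to $f=\frac12(\varkappa^m-\bkap)^2$ on $\Gamma^{m-1}$, the last integral equals $\mathcal{E}(\Gamma^{m-1},\varkappa^m)$. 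Substituting gives \eqref{eq:energy_dissipation} for every $\ttau>0$. There is no smallness condition, because \eqref{eq:Jmexp} is never used and only the exact identity \eqref{eq:change_of_variables} enters.

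The main obstacle is being careful with the inner products. All terms in \eqref{eqn:fd} use the exact $\ipd{\cdot,\cdot}_{\Gm}$, not the mass-lumped one, so the pointwise inequality integrates directly. However, $b$ is not in $\bV^h(\Gm)$, because of $\sqrt{\mathcal{J}^m}$ and the pullback. This is harmless, since we only test with $a\in\bV^h(\Gm)$ and the inequality is pointwise. We also need $\mathcal{J}^m\ge0$, so that $\sqrt{\mathcal{J}^m}$ is real and $b^2=(\varkappa^m_{\Gm}-\bkap)^2\mathcal{J}^m$. This holds by definition of $\mathcal{J}^m$ as the square root of a Gram determinant, even if $\vec\Phi^m$ degenerates on some element. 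Finally, we must check that the sign conventions of $\mathscr{A}_{\Gm}$ and of the cubic terms in \eqref{eq:fd1}--\eqref{eq:fd2} really produce cancellation rather than doubling. From the displayed scheme they appear with opposite signs after the chosen test functions, but this is the step to verify.
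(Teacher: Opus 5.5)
Your proposal is correct and follows the paper's proof essentially verbatim: it uses the same test functions and the same cancellations, the same inequality $a(a-b)\ge\frac12(a^2-b^2)$, and the same exact change of variables \eqref{eq:change_of_variables}. The sign check you flagged does work out, since the gradient, $\mathcal{W}^m$, and cubic terms enter \eqref{eq:fd1} and \eqref{eq:fd2} with opposite signs; your remarks on exact versus mass-lumped inner products and on $\mathcal{J}^m\ge 0$ are correct details the paper leaves implicit.
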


\begin{proof}
We set $\varphi^h = \ttau\mathscr{V}^{m+1}$ in \eqref{eq:fd1}, choose $\chi^h = \varkappa^{m+1} - \bar{\varkappa}$ in \eqref{eq:fd2}, multiply \eqref{eq:fd2} by $\ttau$, and combine the two equations to obtain, after recalling the constraints \eqref{eq:fd3} and \eqref{eq:fd4},
\begin{equation}\label{eq:es1}
\ttau \ipd{\mathscr{V}^{m+1}, \mathscr{V}^{m+1}}_{\Gm} + \ipd{(\varkappa^{m+1}-\bar{\varkappa}) - (\varkappa_{\Gamma^m}^{m}-\bar{\varkappa}) \sqrt{\mathcal{J}^{m}}, \, \varkappa^{m+1}-\bar{\varkappa}}_{\Gamma^{m}} = 0.
\end{equation}

Applying the inequality $a(a-b) \ge \frac{1}{2}(a^2 - b^2)$ to the second term in \eqref{eq:es1}, we obtain the lower bound:
\begin{align*} 
&\left( (\varkappa^{m+1}-\bar{\varkappa}) - (\varkappa_{\Gamma^m}^{m}-\bar{\varkappa}) \sqrt{\mathcal{J}^{m}}, \, \varkappa^{m+1}-\bar{\varkappa} \right)_{\Gamma^{m}} \\
&\qquad \ge \frac{1}{2} \norm{\varkappa^{m+1}-\bar{\varkappa}}_{\Gamma^m}^2 - \frac{1}{2} \norm{(\varkappa_{\Gamma^m}^m-\bar{\varkappa})\sqrt{\mathcal{J}^m}}_{\Gamma^m}^2 \\
&\qquad = \mathcal{E}(\Gm, \varkappa^{m+1}) - \frac{1}{2} \int_{\Gamma^m} (\varkappa_{\Gamma^m}^m-\bar{\varkappa})^2 \mathcal{J}^m \,\dH^{d-1}.
\end{align*}
Finally, by the change-of-variables formula \eqref{eq:change_of_variables}, the last integral on $\Gamma^m$ pulls back exactly to the energy evaluated on the previous mesh $\Gamma^{m-1}$. Thus,
\begin{equation}
\ttau \|\mathscr{V}^{m+1}\|_{\Gamma^m}^2 + \mathcal{E}(\Gm, \varkappa^{m+1}) - \mathcal{E}(\Gamma^{m-1}, \varkappa^m) \le 0,
\end{equation}
which gives the desired result in \eqref{eq:energy_dissipation}.
\end{proof}

It is noteworthy that the stability results in both \eqref{eq:energylawweak} and \eqref{eq:energy_dissipation} control only the normal component of the velocity, not the full velocity. This is consistent with the fact that the gradient-flow structure in \eqref{eq:3d_velocity} is determined solely by the normal velocity $\mathscr{V}$ on $\Gt$.

We refer to \eqref{eq:fd3} and \eqref{eq:fd4} as soft constraints, since they involve only the normal velocity. Exact preservation of volume and surface
area, however, depends crucially on how the polyhedral mesh is updated, as discussed in the following subsection. 

\subsection{Geometric mesh movement}

The enclosed volume and surface area of the polyhedral surface $\Gm$ can be written as 
\begin{equation}\label{eq:discrete_vol_area} 
\begin{aligned} 
    \vol(\Gm) &:= \frac1d \int_{\Gamma^m} \vec\id \cdot \vec\nu^m \, \dH^{d-1} = \frac{1}{d!\,d}\sum_{j=1}^J \sum_{k=1}^d \vec q_{j_k}^m \cdot \vec{N}(\sigma_j^m), \\ 
    |\Gm| &:= \int_{\Gamma^m} 1\,\dH^{d-1} = \sum_{j=1}^J |\sigma_j^m|=\frac{1}{(d-1)!}\sum_{j=1}^J|\vec{N}(\sigma_j^m)|. 
\end{aligned} 
\end{equation} 

Given the normal velocity $\mathscr{V}^{m+1}$ solved from \eqref{eqn:fd}, we then combine it with the BGN tangential velocity to update the polyhedral surface mesh. 

To enable exact volume preservation, we follow the work in \cite{BZ21SPFEM} and introduce the time-weighted interface normals $\vec{\nu}^{m+\frac{1}{2}}$ element-wise on $\Gm$ as:
\begin{equation}\label{eq:J_eff}
    \vec\nu^{m+\frac{1}{2}}|_{\sigma_j^m} :=
    \begin{cases}
        \frac{1}{2\,|\vec{N}(\sigma_j^m)|} \bigl[ \vec{N}(\sigma_j^m) + \vec{N}(\sigma_j^{m+1}) \bigr], & \text{if } d=2, \\[1.5ex]
        \frac{1}{6\,|\vec{N}(\sigma_j^m)|} \bigl[ \vec{N}(\sigma_j^m) + 4\,\vec{N}(\sigma_j^{m+\frac12}) + \vec{N}(\sigma_j^{m+1}) \bigr], & \text{if } d=3.
    \end{cases}
\end{equation}
Here, for $d=3$, $\sigma_j^{m+\frac12}$ denotes the intermediate triangle formed by the vertices 
\[\vec q_{j_k}^{m+\frac12} := \frac12(\vec q_{j_k}^m+\vec q_{j_k}^{m+1})\quad\mbox{for}\quad k=1,2,3.\]

Recalling the transport theorem in \eqref{eq:transp}:
\[\frac{\mathrm{d}}{\mathrm{d} t}\vol(\Gamma(t))  = \int_{\Gamma(t)} \mathscr{V} \dH^{d-1},\]
we then have its discrete analogue using the time-weighted normals in \eqref{eq:J_eff}. 
\begin{lem}[discrete volume identity] \label{lem:volume_conservation}
If $\vec X^{m+1}\in[\bV^h(\Gm)]^d$ with $\Gamma^{m+1}=\vec X^{m+1}(\Gm)$, then the following identity holds:
\begin{equation} \label{eq:exact_vol_diff}
    \vol(\Gamma^{m+1})- \vol(\Gamma^m) = \ipd{\vec{X}^{m+1}-\vec\id,~\vec{\nu}^{m+\frac{1}{2}}}^h_{\Gamma^m}.
\end{equation}
\end{lem}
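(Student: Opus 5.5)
The identity is exact and purely algebraic, so the plan is to compute both sides element by element and match them. Write $\vec X^{m+1}(\vec q_k^m)=\vec q_k^{m+1}$ and consider the straight-line homotopy $\vec q_k(\theta)=\vec q_k^m+\theta(\vec q_k^{m+1}-\vec q_k^m)$, $\theta\in[0,1]$. This defines a family of polyhedral surfaces $\Gamma(\theta)$ with $\Gamma(0)=\Gm$ and $\Gamma(1)=\Gamma^{m+1}$. Then
\[
\vol(\Gamma^{m+1})-\vol(\Gm)=\int_0^1 \frac{\rm d}{{\rm d}\theta}\vol(\Gamma(\theta))\,{\rm d}\theta .
\]
For each polyhedral surface, the derivative of the enclosed volume is the flux of the (piecewise linear) vertex velocity through the surface:
\[
\frac{\rm d}{{\rm d}\theta}\vol(\Gamma(\theta))=\sum_j\int_{\sigma_j(\theta)}\vec W\cdot\vec\nu\,\dH^{d-1},
\qquad \vec W=\vec X^{m+1}-\vec\id \text{ interpolated linearly}.
\]
This holds because each face sweeps a region whose signed volume is this flux; it is the discrete transport theorem \eqref{eq:transp}, and it can also be verified directly from the formula \eqref{eq:discrete_vol_area} using the divergence theorem.

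Next, write the face integral as $\frac{1}{(d-1)!}\vec N(\sigma_j(\theta))\cdot\bar{\vec W}_j$, where $\bar{\vec W}_j$ is the mean of the (constant-in-$\theta$) vertex increments $\vec W(\vec q_{j_k}^m)$. The only $\theta$-dependence is then in $\vec N(\sigma_j(\theta))$. For $d=2$ this vector is affine in $\theta$; for $d=3$ it is quadratic, being a cross product of two edge vectors that are each affine in $\theta$. Hence the $\theta$-integral is computed exactly by the trapezoidal rule when $d=2$ and by Simpson's rule when $d=3$. This yields precisely $\frac12[\vec N(\sigma_j^m)+\vec N(\sigma_j^{m+1})]$ and $\frac16[\vec N(\sigma_j^m)+4\vec N(\sigma_j^{m+\frac12})+\vec N(\sigma_j^{m+1})]$, respectively.

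Finally, compare with the right-hand side. Since $\vec\nu^{m+\frac12}$ is constant on each $\sigma_j^m$, the mass-lumped product \eqref{eq:tprule} gives
\[
|\sigma_j^m|\,\frac{1}{d}\sum_k \vec W(\vec q_{j_k}^m)\cdot\vec\nu^{m+\frac12}|_{\sigma_j^m}.
\]
Using $|\sigma_j^m|=|\vec N(\sigma_j^m)|/(d-1)!$, the factor $|\vec N(\sigma_j^m)|$ cancels with the normalization in \eqref{eq:J_eff}. This leaves $\frac{1}{(d-1)!}\bar{\vec W}_j$ dotted with the averaged $\vec N$, which matches the previous step term by term. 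Summing over $j$ gives \eqref{eq:exact_vol_diff}.

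The main obstacle is justifying the face-flux formula $\frac{\rm d}{{\rm d}\theta}\vol=\sum_j\frac{1}{(d-1)!}\vec N\cdot\bar{\vec W}_j$ for polyhedra. It does not follow naively from differentiating \eqref{eq:discrete_vol_area}, because the derivative of $\vec N$ also contributes. I would handle this by differentiating $\frac{1}{d!\,d}\sum_j\sum_k\vec q_{j_k}\cdot\vec N(\sigma_j)$ and showing that the terms involving $\partial_\theta\vec N$ cancel across shared edges because the surface is closed, leaving $d$ times the naive term. Alternatively, I would decompose the swept volume into signed simplices and apply the divergence theorem to the constant field $\vec W$ on each prism-like region.
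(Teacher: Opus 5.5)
Your proposal is correct and follows essentially the same route as the argument the paper defers to in \cite{BZ21SPFEM}. That argument interpolates the vertices linearly in time and writes the volume derivative as the flux $\frac{1}{(d-1)!}\sum_j \vec N(\sigma_j(\theta))\cdot\bar{\vec W}_j$. It then integrates the affine ($d=2$) or quadratic ($d=3$) unnormalized normal exactly by the trapezoidal or Simpson rule, and uses that mass lumping is exact for piecewise-linear times piecewise-constant integrands.

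For the flux formula, the algebraic route you describe is the right one, since it also covers degenerate or self-intersecting intermediate surfaces. One clarification: the $\partial_\theta\vec N$ terms do not vanish outright. After the cancellation over shared edges they reproduce $(d-1)$ times the naive term, and that is what produces the factor $d$.
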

\begin{proof}
The proof can be found in \cite[Theorems~2.1 and~3.1]{BZ21SPFEM}.
\end{proof}

To preserve the surface area, we introduce an additional Lagrange multiplier that corrects the mesh update implicitly without influencing volume preservation. 

Now we are ready to present the BGN-type discretization for the movement of the polyhedral mesh. For each $m\geq 0$, we are given $\mathscr{V}^{m+1}$. We then seek $\Gamma^{m+1}=\vec X^{m+1}(\Gm)$ with $\vec X^{m+1}\in[\bV^h(\Gm)]^d$, $\kappa^{m+1}\in \bV^h(\Gamma^m)$, and the area-correction multiplier $\alpha^{m+1} \in \mathbb{R}$ such that
\begin{subequations}\label{eqn:fdBGN}
\begin{align}
&\ipd{\frac{\vec{X}^{m+1} - \vec\id }{\ttau} \cdot \vec{\nu}^{m+\frac{1}{2}}, \xi^h}^h_{\Gamma^m} = \ipd{\mathscr{V}^{m+1}, \xi^h }_{\Gamma^m}  + \alpha^{m+1} \ipd{\kappa^m - \bar{\kappa}^m, \xi^h}^h_{\Gamma^m};  \label{eq:fdBGN1} \\[0.5em]
&\ipd{\kappa^{m+1} \vec{\nu}^m, \vec{\eta}^h}^h_{\Gamma^m} + \ipd{\nabs \vec{X}^{m+1}, \nabs \vec{\eta}^h }_{\Gamma^m} = 0; \label{eq:fdBGN3} \\[0.5em]
& \qquad |\Gamma^{m+1}| = |\Gamma^0|;\label{eq:fdBGN2}
\end{align}
\end{subequations} 
for $\left(\xi^h, \vec\eta^h\right)\in\bV^h(\Gm)\times [\bV^h(\Gm)]^d$, where $\bar{\kappa}^m=\frac{(\kappa^{m}, 1)^h_{\Gamma^m}}{(1,1)^h_{\Gamma^m}}$. 

Here, the term $\alpha^{m+1}(\kappa^m-\bar{\kappa}^m)$ serves as the Lagrange multiplier term for the hard area constraint \eqref{eq:fdBGN2}. Because it has zero mean, the volume preservation property remains unchanged. Note that \eqref{eqn:fdBGN} also yields the geometric curvature $\kappa^{m+1}$, which will be used in both the soft and hard constraints for surface area preservation at the next time step.

We have the following theorem for the exact preservation of the discrete volume and surface area. 

\begin{thm}[geometric preservation] Let $(\mathscr{V}^{m+1}, \varkappa^{m+1}, \lambda^{m+1}, \mu^{m+1})\in [\bV^h(\Gm)]^2\times \bR^2$ be a solution of
  \eqref{eqn:fd}. Let $(\vec X^{m+1}, \kappa^{m+1}, \alpha^{m+1})
  \in [\bV^h(\Gm)]^d \times \bV^h(\Gm) \times \bR$ be a solution of \eqref{eqn:fdBGN}, and set $\Gamma^{m+1}=\vec X^{m+1}(\Gm)$.
Then
\begin{subequations}\label{eqn:geopre}
\begin{align}\label{eq:geovolume}
\vol(\Gamma^{m+1}) &=\vol(\Gm),\qquad m\geq 0;\\
|\Gamma^{m+1}| & = |\Gamma^m|,\qquad m\geq 0. 
\label{eq:geoarea}
\end{align}
\end{subequations}
\end{thm}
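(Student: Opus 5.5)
The plan is to prove the two identities separately. The volume identity follows from Lemma~\ref{lem:volume_conservation} by a suitable test function in \eqref{eq:fdBGN1}. The area identity is built into the system through the hard constraint \eqref{eq:fdBGN2}.

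For \eqref{eq:geovolume}, I will choose $\xi^h = \ttau$ (the constant function, which lies in $\bV^h(\Gm)$) in \eqref{eq:fdBGN1}. This gives
\begin{equation*}
\ipd{(\vec X^{m+1}-\vec\id)\cdot\vec\nu^{m+\frac12}, 1}^h_{\Gm} = \ttau\,\ipd{\mathscr{V}^{m+1},1}_{\Gm} + \ttau\,\alpha^{m+1}\ipd{\kappa^m-\bar\kappa^m,1}^h_{\Gm}.
\end{equation*}
The first term on the right vanishes by the soft constraint \eqref{eq:fd3}, which is posed with the exact inner product on $\Gm$, the same one used in \eqref{eq:fdBGN1}.

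The second term vanishes because of the definition of $\bar\kappa^m$. Indeed, $\ipd{\kappa^m-\bar\kappa^m,1}^h_{\Gm} = \ipd{\kappa^m,1}^h_{\Gm} - \bar\kappa^m\ipd{1,1}^h_{\Gm} = 0$. The left-hand side equals $\ipd{\vec X^{m+1}-\vec\id, \vec\nu^{m+\frac12}}^h_{\Gm}$, because the mass-lumped product \eqref{eq:tprule} is evaluated pointwise at vertex limits, so the scalar product with the constant $1$ can be moved inside. By Lemma~\ref{lem:volume_conservation}, this quantity equals $\vol(\Gamma^{m+1})-\vol(\Gm)$. Hence $\vol(\Gamma^{m+1})=\vol(\Gm)$.

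For \eqref{eq:geoarea}, the argument is short. By \eqref{eq:fdBGN2}, which holds at every step, we have $|\Gamma^{m+1}|=|\Gamma^0|$ for all $m\ge 0$. For $m\ge1$, the same constraint at the previous step gives $|\Gm|=|\Gamma^0|$, so $|\Gamma^{m+1}|=|\Gm|$. For $m=0$ the statement is trivial. By induction, $|\Gamma^{m+1}|=|\Gm|$ for every $m\ge 0$.

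The only step requiring care is the volume step, specifically checking that each term is paired with the right inner product. The test function $\xi^h$ is constant, and mass lumping is exact for integrating constants and piecewise-linear functions against piecewise-constant ones at vertices. So the identification of the left-hand side with the quantity in Lemma~\ref{lem:volume_conservation} and the zero-mean cancellation both go through.

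I would also note explicitly that the theorem presupposes that a solution of the nonlinear system \eqref{eqn:fdBGN} exists. The result is conditional on solvability and does not assert existence.
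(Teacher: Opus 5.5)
Your proof is correct and follows the paper's argument: test \eqref{eq:fdBGN1} with $\xi^h=\ttau$, use \eqref{eq:fd3} and Lemma~\ref{lem:volume_conservation} for the volume, and use the hard constraint \eqref{eq:fdBGN2} for the area. You also make explicit two points the paper leaves implicit: the $\alpha^{m+1}$ term vanishes by the mass-lumped definition of $\bar\kappa^m$, and the previous step's constraint is needed to get $|\Gm|=|\Gamma^0|$.
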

\begin{proof}
The surface area preservation \eqref{eq:geoarea} follows straightforwardly from the hard constraint \eqref{eq:fdBGN2}.

For the volume preservation, we set $\xi^h = \ttau$ in \eqref{eq:fdBGN1}, and recall the soft constraint in \eqref{eq:fd3}. This gives
\begin{equation}
\ipd{(\vec X^{m+1}-\vec\id)\cdot\vec\nu^{m+\frac{1}{2}},~1}_{\Gm}^h = \ttau\ipd{\mathscr{V}^{m+1},~1}_{\Gm} = 0,
\end{equation}
which implies the volume preservation \eqref{eq:geovolume} using \eqref{eq:exact_vol_diff}.
\end{proof}

Note that \eqref{eqn:fdBGN} leads to a system of nonlinear equations. To solve the nonlinear system and enforce the global area constraint, we use a Newton--Raphson iteration strategy. At the discrete time $t_m$, for each $\ell\geq 0$, we set $(\vec{X}^{m+1,(\ell)}, \kappa^{m+1,(\ell)}, \alpha^{m+1,(\ell)})$ to be the current state at the $\ell$-th Newton iteration with $\Gamma^{m+1,(\ell)} = \vec X^{m+1,(\ell)}(\Gm)$. We then seek increments $(\delta\vec{X}, \delta\kappa, \delta\alpha)\in[\bV^h(\Gm)]^d\times \bV^h(\Gm)\times\bR$ such that $\vec{X}^{m+1,(\ell+1)} = \vec{X}^{m+1,(\ell)} + \delta\vec{X}$, with analogous updates for $\kappa$ and $\alpha$. The Newton system for the increments is given as follows:
\begin{subequations}\label{eq:newton_system}
\begin{align}
    &\ipd{\frac{\delta\vec X}{\ttau}\cdot \vec{\nu}^{m+\frac12,(\ell)} + \vec{A}^{(\ell)} \cdot \delta\vec X, \, \xi^h}^h_{\Gamma^m}  = -  \ipd{\frac{\vec{X}^{m+1,(\ell)} - \vec\id }{\ttau} \cdot \vec{\nu}^{m+\frac12,(\ell)}, \, \xi^h }^h_{\Gamma^m}\nn\\
    &\qquad\qquad\qquad+ \ipd{\mathscr{V}^{m+1}, \, \xi^h }_{\Gamma^m}  - [\alpha^{m+1,(\ell)}+\delta\alpha]\ipd{\kappa^m - \bar{\kappa}^m, \, \xi^h}^h_{\Gamma^m}, \label{eq:newton_1} \\[0.7em]
    &\ipd{\delta\kappa \, \vec{\nu}^m, \, \vec{\eta}^h }^h_{\Gamma^m} + \ipd{\nabs(\delta\vec{X}), \, \nabs \vec{\eta}^h}_{\Gamma^m} = - \ipd{\kappa^{m+1,(\ell)} \vec{\nu}^m, \, \vec{\eta}^h}^h_{\Gamma^m} - \ipd{\nabs \vec{X}^{m+1,(\ell)}, \, \nabs \vec{\eta}^h}_{\Gamma^m} , \label{eq:newton_2} \\[0.7em]
&\ipd{\nabs(\delta\vec{X}),~\nabs\vec X^{m+1,(\ell)}}_{\Gamma^{m+1,(\ell)}} = |\Gamma^0|- |\Gamma^{m+1,(\ell)}|, \label{eq:newton_3}
\end{align}
\end{subequations}
for all test functions $\left(\xi^h, \vec\eta^h\right)\in \bV^h(\Gamma^m)\times [\bV^h(\Gamma^m)]^d$. 

In \eqref{eq:newton_system}, the intermediate normal vector at the $\ell$-th iteration, denoted by $\vec{\nu}^{m+\frac12,(\ell)}$, is computed from \eqref{eq:J_eff} with the unknown future state $\vec{X}^{m+1}$ replaced by the current iterate $\vec{X}^{m+1,(\ell)}$. 

The vector field $\vec{A}^{(\ell)}$ is assembled directly from element-wise nodal contributions; see also \cite[(2.24),(3.26)]{BZ21SPFEM}. We illustrate this in the case $d=3$. Let $\sigma^m\in\mT^m$ be an element on $\Gamma^m$ with well-ordered vertices $\{\vec{q}_1^m, \vec{q}_2^m, \vec{q}_3^m\}$, and let
\[\{\vec{X}_1^{(\ell)}, \vec{X}_2^{(\ell)}, \vec{X}_3^{(\ell)}\}=\{\vec X^{m+1,(\ell)}(\vec q_1^m), \vec X^{m+1,(\ell)}(\vec q_2^m), \vec X^{m+1,(\ell)}(\vec q_3^m)\}\]
 be the corresponding vertices on $\Gamma^{m+1,(\ell)}$. The local contribution $\vec{a}_i(\sigma^m)$ to $\vec{A}^{(\ell)}$ at the $i$-th vertex of the element ($i=1,2,3$) is explicitly computed as:
\begin{equation} \label{eq:A_l_eval}
    \vec{a}_i(\sigma^m) = \frac{1}{6 |\vec{N}(\sigma^m)|} \vec{V}_{\sigma}^{(\ell)} \times \vec{g}_{jk}^{(\ell)},
\end{equation}
where $(i,j,k)$ is a cyclic permutation of $(1,2,3)$. Here, $\vec{V}_{\sigma}^{(\ell)} = \frac{1}{3\ttau} \sum_{r=1}^3 (\vec{X}_r^{(\ell)} - \vec{q}_r^m)$ is the average element velocity, and $\vec{g}_{jk}^{(\ell)} = (2\vec{X}_k^{(\ell)} + \vec{q}_k^m) - (2\vec{X}_j^{(\ell)} + \vec{q}_j^m)$. The global vector field $\vec{A}^{(\ell)}$ at any mesh vertex is then obtained by taking the arithmetic mean of the local contributions $\vec{a}_i(\sigma^m)$ from all adjacent elements sharing that vertex. Precisely, for each vertex $\vec q\in\mQ^m$, let
  \[
  \Lambda^m(\vec q):=\{\sigma\in\mT^m:\vec q\in\overline{\sigma}\}
  \]
  be the set of elements adjacent to $\vec q$. For each $\sigma\in\Lambda^m(\vec q)$, let $i_\sigma(\vec q)\in\{1,2,3\}$ denote the local index of $\vec q$ in $\sigma$, i.e., $\vec q=\vec q^m_{i_\sigma(\vec q)}(\sigma)$. We then define
  \begin{equation}
  \vec A^{(\ell)}(\vec q)
  :=
  \frac{1}{\#\Lambda^m(\vec q)}
  \sum_{\sigma\in\Lambda^m(\vec q)}
  \vec a_{i_\sigma(\vec q)}(\sigma).
  \end{equation}

The above iteration is terminated when the absolute $L^2$-norm of the displacement increment satisfies
 \[
 \|\delta\vec{X}\|_{\Gamma^m}\leq {\rm tol},
 \] 
 where ${\rm tol}$ is the chosen tolerance. Then the surface is updated for the next time step as $\Gamma^{m+1}=\vec X^{m+1,(\ell_{\max})}(\Gamma^m)$.

\subsection{Algorithmic variants and practical strategies}

The fully discrete schemes \eqref{eqn:fd} and \eqref{eqn:fdBGN} strike a balance between unconditional energy stability and exact global geometric preservation. In practice, however, the preferred choice may depend on the computational budget and the regularity of the initial data, with efficiency or robustness taking priority in different settings. We therefore present two practical variants of the proposed scheme.

\subsubsection{Variant I: Fully linear scheme without exact geometric correction}

To enable exact preservation of area and volume, we introduce nonlinear terms in \eqref{eqn:fdBGN}, which in turn require a Newton iteration. Alternatively, one may sacrifice machine-precision preservation of the volume and area, and instead employ the classical BGN-type linear scheme for the mesh update.

Specifically, we keep \eqref{eqn:fd} unchanged. For the mesh update in \eqref{eqn:fdBGN}, however, we instead find $(\vec X^{m+1}, \kappa^{m+1})\in [\bV^h(\Gm)]^d\times \bV^h(\Gm)$ such that
\begin{subequations}\label{eqn:linfdBGN}
\begin{align}
&\ipd{\frac{\vec{X}^{m+1} - \vec\id }{\ttau} \cdot \vec{\nu}^{m}, \xi^h}^h_{\Gamma^m} = \ipd{\mathscr{V}^{m+1}, \xi^h }_{\Gamma^m}\qquad\forall\xi^h\in\bV^h(\Gm);  \label{eq:linfdBGN1} \\[0.5em]
&\ipd{\kappa^{m+1} \vec{\nu}^m, \vec{\eta}^h}^h_{\Gamma^m} + \ipd{\nabs \vec{X}^{m+1}, \nabs \vec{\eta}^h }_{\Gamma^m} = 0\qquad\forall\vec\eta^h\in[\bV^h(\Gm)]^d.\label{eq:linfdBGN2} 
\end{align}
\end{subequations} 
This variant combines \eqref{eqn:fd} and \eqref{eqn:linfdBGN}, reducing each time step to two linear solves. Although exact geometric conservation is no longer guaranteed, the energy stability property in \eqref{eq:energy_dissipation} is still retained.

\subsubsection{Variant II: Robust startup strategy for rough initial data}
\label{sec:mixed_strategy}

The proposed scheme, consisting of \eqref{eqn:fd} and \eqref{eqn:fdBGN}, involves the time derivative of the mean curvature, $\partial_t^\circ \varkappa$, which imposes a regularity requirement on the initial surface $\Gamma^0$. If the initial surface lacks sufficient smoothness, the scheme may produce a large discrepancy between the evolution curvature $\varkappa^m$ and the geometric curvature $\kappa^m$.

In the case of nonsmooth initial data, we circumvent this issue by using the following startup strategy. During the first few time steps ($m < n_{\mathrm{switch}}$), we omit the curvature evolution equation. Instead, we find $\left(\mathscr{V}^{m+1}, \kappa^{m+1}, \vec X^{m+1} \right) \in \bV^h(\Gamma^m) \times \bV^h(\Gamma^m) \times [\bV^h(\Gamma^m)]^d$, together with the Lagrange multipliers $(\lambda^{m+1},\mu^{m+1})\in \bR^2$, such that
\begin{subequations}\label{eq:3d_startup_scheme}
\begin{align}
    &\ipd{\mathscr{V}^{m+1}, \varphi^h}_{{\Gm}} - \ipd{\nabs\kappa^{m+1}, \nabs\varphi^h}_{\Gm} + \ipd{\mathcal{W}^m\,[\kappa^{m+1}-\bkap], \varphi^h}_{\Gm} \nn\\
    &\qquad - \frac{1}{2}\ipd{[\kappa^m-\bkap]\,\kappa^m\,[\kappa^{m+1}-\bkap], \varphi^h}_{\Gm} \nn\\
    &\qquad - \lambda^{m+1}\ipd{1, \varphi^{h}}_{\Gamma^{m}} - \mu^{m+1}\ipd{\kappa^{m}, \varphi^{h}}_{\Gamma^{m}} = 0, \label{eq:3d_startup_scheme1}\\[0.5em]
    &\ipd{\frac{\vec{X}^{m+1} - \vec{X}^m}{\ttau} \cdot \vec{\nu}^{m+\frac{1}{2}}, \xi^h }^h_{\Gamma^m} - \ipd{\mathscr{V}^{m+1}, \xi^h}_{\Gamma^m} = 0, \label{eq:3d_startup_scheme2} \\[0.5em]
    &\ipd{\kappa^{m+1} \vec{\nu}^m, \vec{\eta}^h }^h_{\Gamma^m} + \ipd{\nabs \vec{X}^{m+1}, \nabs \vec{\eta}^h }_{\Gamma^m} = 0, \label{eq:3d_startup_scheme3} \\[0.5em]
    &\ipd{\mathscr{V}^{m+1}, 1}_{\Gamma^{m}} = 0, \label{eq:3d_startup_scheme4}\\[0.5em]
    &|\Gamma^{m+1}| - |\Gamma^0| = 0, \label{eq:3d_startup_scheme5}
\end{align} 
\end{subequations}
for all test functions $(\varphi^h, \xi^h, \vec{\eta}^h) \in \bV^h(\Gamma^m) \times \bV^h(\Gamma^m) \times [\bV^h(\Gamma^m)]^d$. Although unconditional energy stability no longer holds for this variant, it preserves the discrete enclosed volume and surface area exactly. This also leads to a nonlinear coupled system, which can be solved using the Newton iteration in a similar manner to \eqref{eq:newton_system}.

In practice, we monitor the relative energy dissipation rate 
\[\mathcal{R}_{\mathrm{diss}} := \frac{|E^m - E^{m-1}|}{\ttau\, E^0},\]
where $E^m = \frac{1}{2}\int_{\Gamma^m}(\kappa^{m})^2\dH^{d-1}$.
Once $\mathcal{R}_{\mathrm{diss}}$ drops below a user-defined threshold, e.g. $10^{-2}$, the surface is considered sufficiently regular and the algorithm switches back to the unconditionally stable and structure-preserving scheme.

\section{Numerical results}\label{sec:num}

In this section, we assess the accuracy, energy stability, and structure-preserving properties of the proposed fully discrete scheme through a series of experiments for 2D curves and 3D surfaces. The algorithm is implemented using the open-source finite element package NGSolve \cite{schoberl2014c++}. The resulting sparse linear systems are solved using the UMFPACK direct solver \cite{Davis04}. 

In all experiments, we first construct an initial polyhedral surface $\Gamma_Y$, where $\vec Y=\vec\id|_{\Gamma_Y}\in [\bV^h(\Gamma_Y)]^d$ denotes the identity map. To start the computation, we need $\varkappa^0$ and $\kappa^0$. If the initial surface is a sphere of radius $r_0$, we set $\varkappa^0 = \kappa^0 = -\frac{d-1}{r_0}$. Otherwise, we compute the two discrete mean curvatures using the BGN method with zero normal velocity. Namely, we find $(\delta\vec Y^0, \kappa^0)\in [\bV^h(\Gamma_Y)]^d\times\bV^h(\Gamma_Y)$ such that
\begin{subequations}\label{eqn:ic}
	\begin{align}
		&\ipd{\delta \vec Y^0\cdot\vec\nu_Y, \xi^h}_{\Gamma_Y}^h =0\qquad\forall\xi^h\in \bV^h(\Gamma_Y),\\
		&\ipd{\kappa^0\,\vec\nu_Y,~\vec\eta^h}_{\Gamma_Y}^h + \ipd{\nabs(\vec\id+\delta\vec Y^0),~\nabs\vec\eta^h}_{\Gamma_Y}=0\qquad\forall\vec\eta^h\in [\bV^h(\Gamma_Y)]^d,
	\end{align}
	\end{subequations}
	where $\vec\nu_Y$ is the unit normal of $\Gamma_Y$, defined analogously to \eqref{eq:vG}. We then set $\vec X^0 = \vec\id|_{\Gamma_Y} + \delta\vec Y^0$, $\Gamma^0 = \vec X^0(\Gamma_Y)$, and $\varkappa^0=\kappa^0$.	

Throughout the experiments, we monitor relative constraint violations and mesh regularity by defining the following discrete quantities:
\begin{equation*}
\Delta\vol^m = \frac{\vol(\Gm)-\vol(\Gamma^0)}{\vol(\Gamma^0)},\qquad
\Delta A^m = \frac{|\Gm|-|\Gamma^0|}{|\Gamma^0|},\qquad 
\Psi^m  =\frac{\max_{\sigma\in\mT^m}|\sigma|}{\min_{\sigma\in\mT^m}|\sigma|}.
\end{equation*}

\subsection{2D curve evolutions}

We note that for a simple closed planar curve with length preservation, we have 
\begin{equation*}
\int_{\Gamma}\varkappa\,\dH^1 = 2\pi,\qquad |\Gamma(t)|= A_0.
\end{equation*}
Then the energy in \eqref{eq:Henergy} can be recast as
\begin{align}
E_{\bkap}(\Gamma,\varkappa) = \frac{1}{2}\int_{\Gamma(t)}(\varkappa^2 - 2\,\bkap\,\varkappa + \bkap^2)\dH^1 = \frac{1}{2}\int_{\Gamma(t)}\varkappa^2\dH^1 - 2\pi\bkap + \frac{1}{2}\bkap^2 A_0,
\end{align}
which differs from the standard bending energy $E_0$ only by a constant. Thus, without loss of generality, we only consider the case of $\bkap = 0$ for all 2D examples.

\vspace{0.5em}
\noindent{\bf Example 1: Convergence test and geometric properties.}

\begin{table}[t]
\centering
\caption{Errors and convergence, as well as discrete geometric quantities in the evolution of a $2\times 1$ ellipse, where $h_0= \frac{1}{64}$ and $\ttau=\mathcal{O}(h^2)$.}
\label{tab:exp1_convergence}
\begin{tabular}{ccccc}
\hline
$J$ & $e_{h,\ttau}(T=1)$ & order & $\max_{0\leq m\leq M}|\Delta A^m|$ & $\max_{0\leq m\leq M}|\Delta\vol^m|$  \\
\hline
64   & --          & --     & 9.16E-16  & 1.43E-13   \\
128  & 4.40E-3 & --     & 1.28E-15  & 1.10E-13   \\
256  & 1.10E-3 & 1.98 & 1.83E-15  & 2.63E-13   \\
512  & 3.31E-4 & 1.87 & 3.48E-15  & 1.12E-12  \\
1024 & 8.38E-5 & 1.86 & 4.58E-15  & 4.37E-12   \\
2048 & 2.20E-5 & 1.93 & 8.07E-15  & 1.74E-11   \\
\hline
\end{tabular}
\end{table}

We start with a convergence experiment by considering the relaxation of a $2\times 1$ ellipse up to $T=1.0$. Since no exact solution is available, we assess the numerical errors by comparing solutions on successively refined meshes using the manifold distance $\mathrm{M}(\Gamma_h, \Gamma_{h/2})$ \cite{Zhao2021energy}, where the time step is chosen as $\Delta t = \mathcal{O}(h^2)$ with $h=\frac{1}{J}$. Precisely, we measure the errors between two closed planar curves $\Gamma_1$ and $\Gamma_2$ by the symmetric difference of their enclosed domains, i.e.,
\begin{equation}
\mathrm{M}(\Gamma_1, \Gamma_2) := | \Omega_1 \triangle \Omega_2 | = |\Omega_1| + |\Omega_2| - 2|\Omega_1 \cap \Omega_2|,
\end{equation}
where $\Omega_1$ and $\Omega_2$ denote the regions enclosed by $\Gamma_1$ and $\Gamma_2$, respectively. Then the errors are computed as
\begin{equation*}
e_{h,\ttau}(t) = \mathrm{M}(\Gamma_{h,\ttau}(t), \Gamma_{2\,h, 4\ttau}(t)),
\end{equation*}
where $\Gamma_{h,\ttau}(t) = \vec X_{h,\ttau}(\Gamma^m, t)$ for $t\in [t_m, t_{m+1}]$ and $\vec X_{h,\ttau}(\cdot, t)$ is defined via
\begin{equation*}
		\vec X_{h,\ttau} (\vec q, t) = \frac{t_{m+1}-t}{\ttau}\vec q + \frac{t-t_m}{\ttau}\vec X^{m+1}(\vec q),\quad\forall\vec q\in \mQ^m, \quad t\in[t_m, t_{m+1}].
	\end{equation*}

The numerical results are reported in Table~\ref{tab:exp1_convergence}, which demonstrates the optimal second-order convergence rate. Meanwhile, the maximum relative losses in length and enclosed area remain at the level of machine precision throughout the evolution. The slight increase in the relative enclosed-area loss is likely due to solver tolerances.

\vspace{0.5em}
\noindent{\bf Example 2: Evolution of an 8:1 elongated tube.} 

\begin{figure}[!htp]
    \centering
    \includegraphics[width=0.85\linewidth]{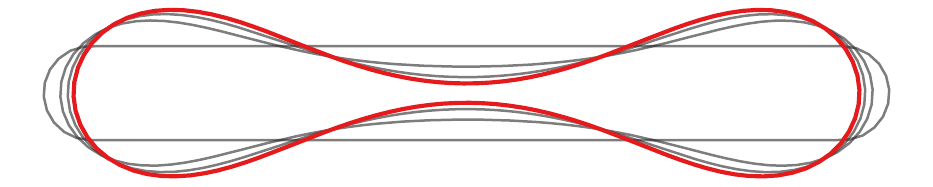}
    \caption{Evolution of an 8:1 elongated tube toward a dumbbell steady state.}
    \label{fig:ex2_evolution}
\end{figure}

\begin{figure}[!htp]
    \centering
    \includegraphics[width=0.95\linewidth]{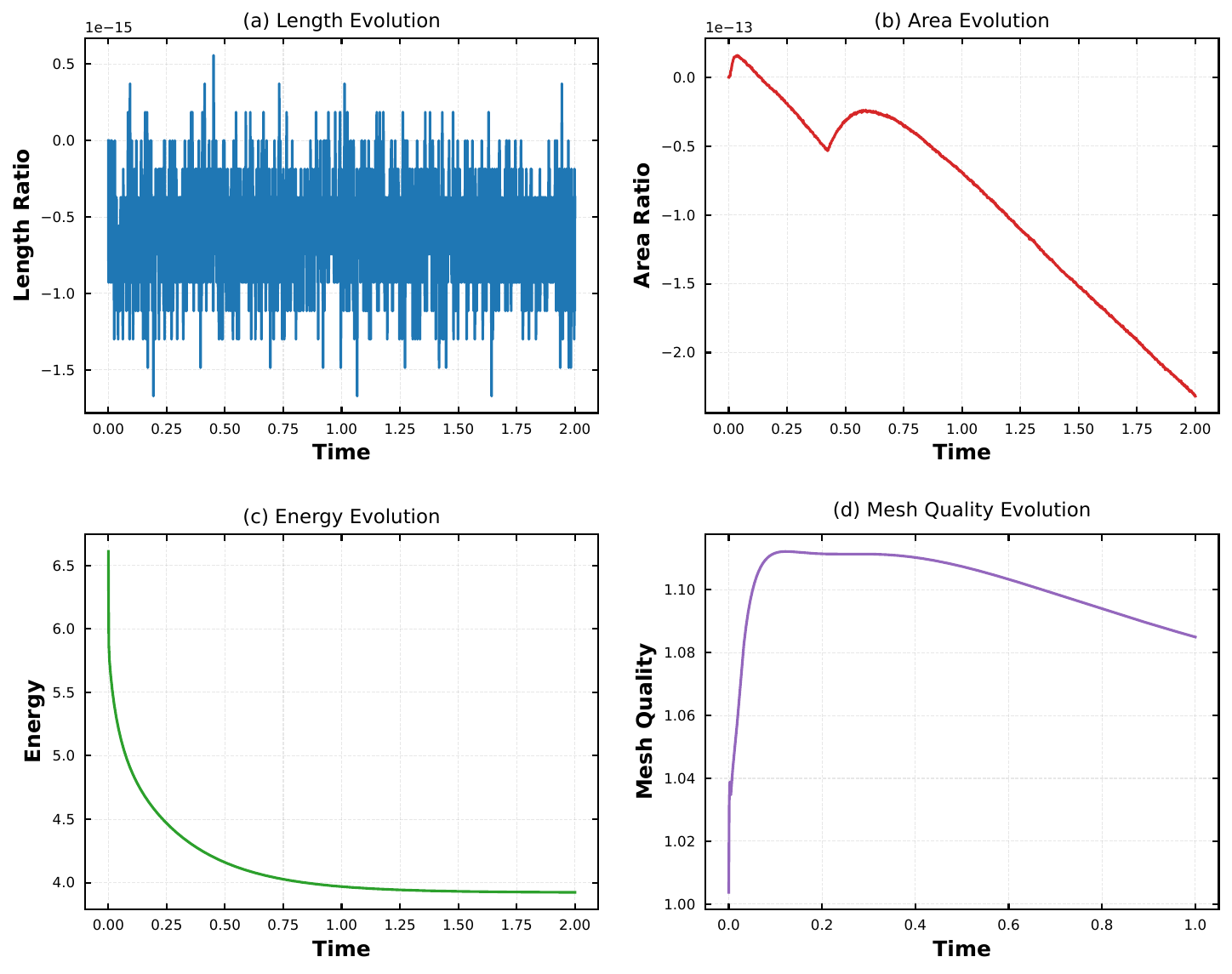}
    \caption{Time plots of the discrete quantities in the evolution of the $8\times 1$ elongated tube.}
    \label{fig:ex2_metrics}
\end{figure}

To test the performance of our method under large deformations, we consider a capsule-like tube with an 8:1 aspect ratio as the initial curve. We use $J=128$ and $\ttau = 10^{-3}$, and visualize the results in Figure~\ref{fig:ex2_evolution}. Here, the curve evolves toward a symmetric dumbbell-shaped steady state corresponding to a minimal-energy configuration.

We also plot the time histories of the discrete quantities in Figure~\ref{fig:ex2_metrics}. We observe the exact preservation of the discrete enclosed area and length, together with a monotonic energy decay. Furthermore, the mesh ratio $\Psi^m$ increases slightly at the initial stage and then gradually decreases, indicating that the implicitly generated BGN tangential motion effectively prevents vertex clustering. Therefore, artificial remeshing is generally not required.

\vspace{0.5em}
\noindent{\bf Example 3: Startup strategy for nonsmooth initial data.} 

We next evaluate the startup strategy, i.e., the {\bf Variant II} scheme in Section~\ref{sec:mixed_strategy}. For the initial curve, we consider the boundary of a $2 \times 2$ square with a $0.8 \times 1$ rectangular notch. This curve is $C^0$-continuous but has sharp corners. For the discretization parameters, we use $J=256$, $\ttau = 10^{-3}$, and $T=3.0$. As visualized in Figure~\ref{fig:ex4_evolution}, the {\bf Variant II} scheme acts as a robust startup until $t=10^{-3}$ to handle the nonsmooth initial data. Then we switch to the main scheme \eqref{eqn:fd}--\eqref{eqn:fdBGN}. In general, we observe preservation of the area and length to machine precision throughout.

\begin{figure}[!htbp]
    \centering
    \includegraphics[width=0.85\linewidth]{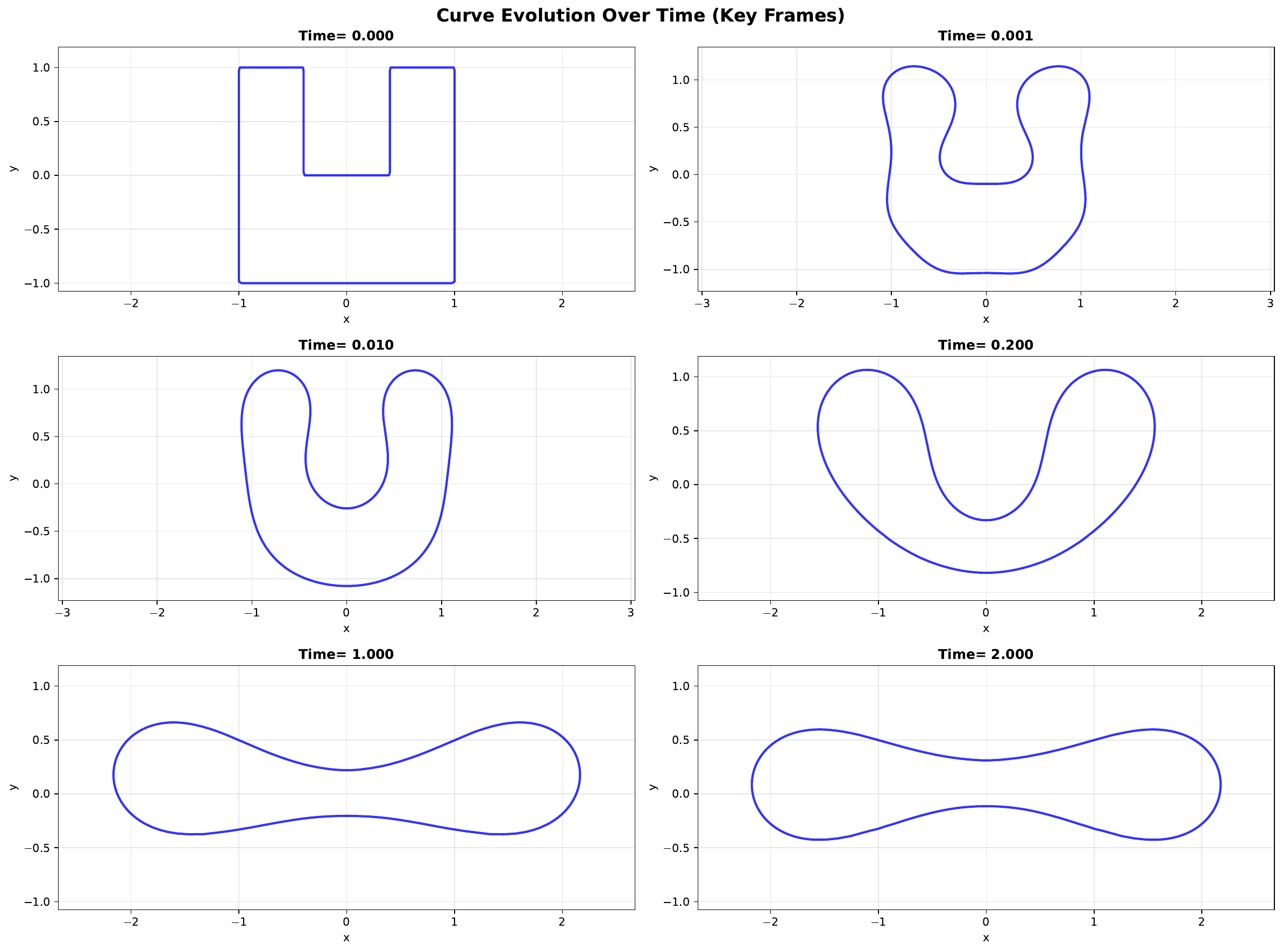}
    \caption{Snapshots in the evolution of the nonsmooth initial curve, which is taken as the boundary of a notched square.}
    \label{fig:ex4_evolution}
\end{figure}

\begin{figure}[!htp]
    \centering
    \includegraphics[width=0.85\linewidth]{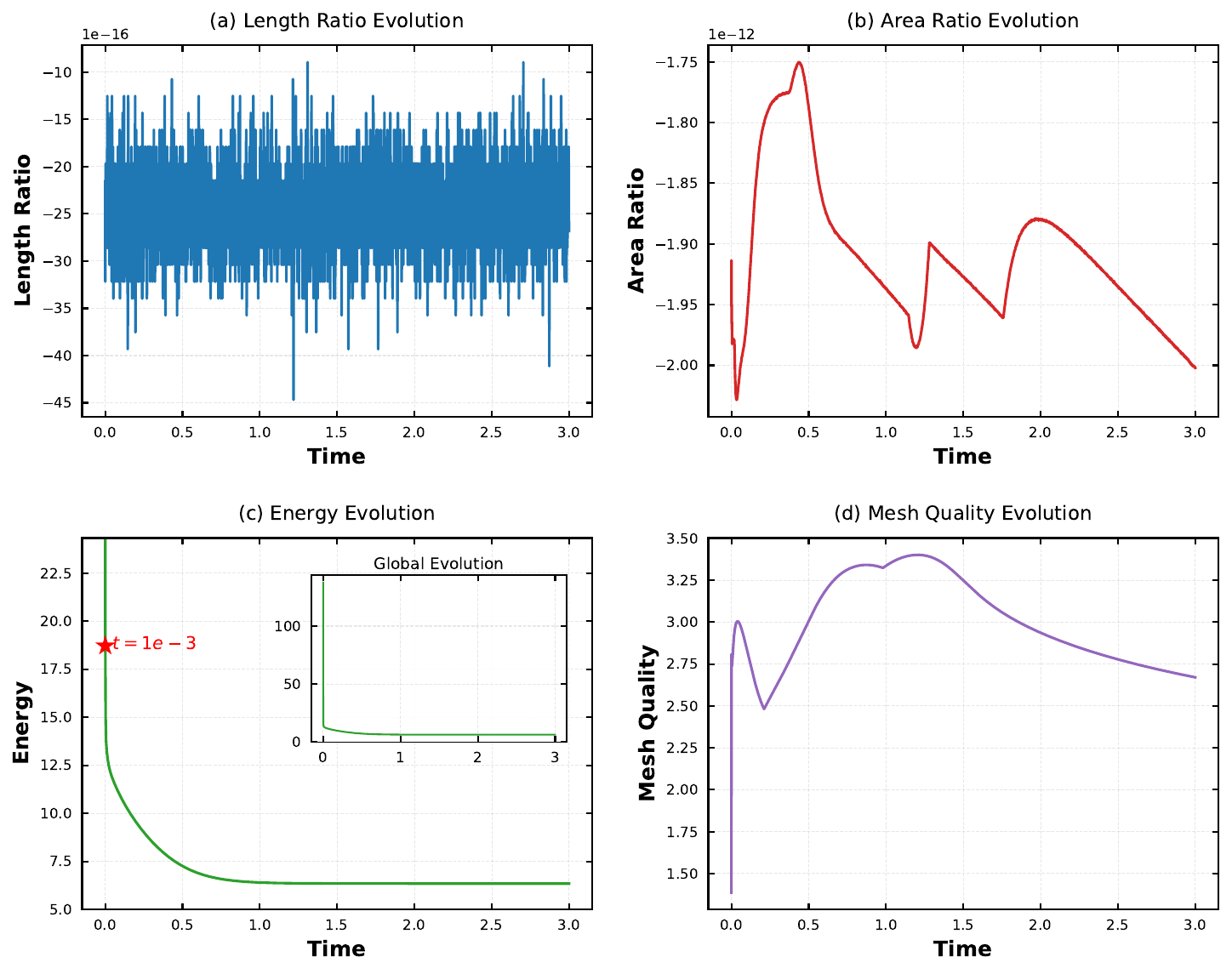}
    \caption{Time histories of the discrete quantities in the evolution of the nonsmooth initial curve, which is taken as the boundary of a notched square.}
    \label{fig:ex4_metrics}
\end{figure}

\subsection{3D surface dynamics}

The morphology of lipid bilayer membranes governed by the Helfrich energy is strongly influenced by the spontaneous curvature $\bkap$ and the dimensionless reduced volume, which is defined as follows (see \cite{SBK1991}):
\[0 < v := \frac{3V}{4\pi (A/4\pi)^{3/2}} \le 1.\]
For the symmetric initial geometries considered here, the constrained gradient flow typically evolves toward the classical \textit{prolate-dumbbell} or \textit{oblate-discocyte} metastable branches~\cite{MJK2023helfrich,BONITO2010}. 

\vspace{0.5em}
\noindent{\bf Example 4: The oblate-discocyte branch.}

We first consider an initial $4\times4\times 1$ oblate ellipsoid ($v \approx 0.586$) with 7,730 vertices, using $\bkap = 0$ and $\Delta t = 10^{-3}$. Figure~\ref{fig:3d1} shows the surface successfully relaxing into a classical biconcave discocyte, mimicking the morphology of human red blood cells. The quantitative results in Figure~\ref{fig:3d_ep1__metrics} also confirm exact preservation of the volume and surface area up to machine precision. The decay of the discrete energy is observed as well.  

\begin{figure}[!htp]
\centering
\includegraphics[width=0.4\linewidth]{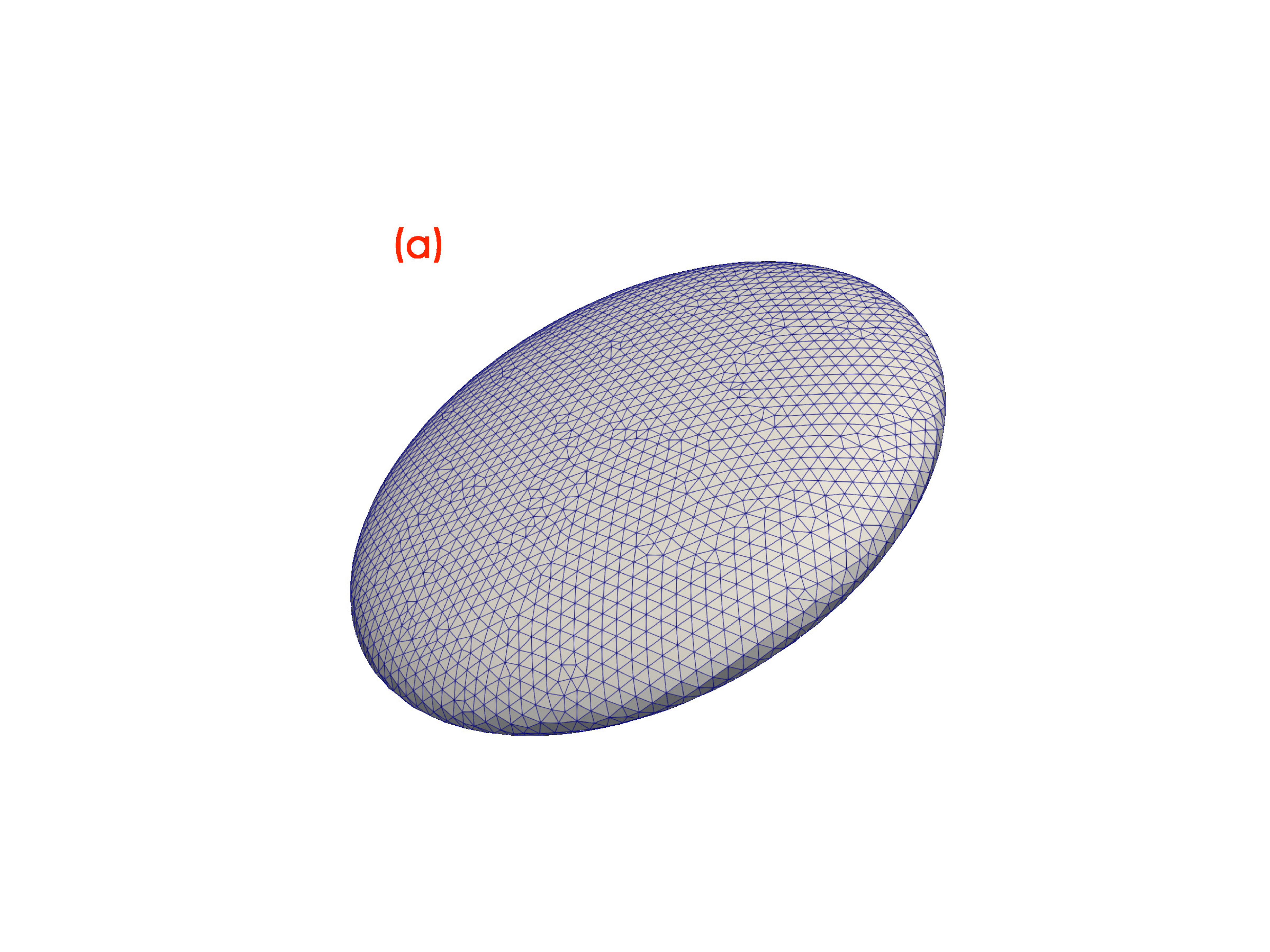}
\includegraphics[width=0.4\linewidth]{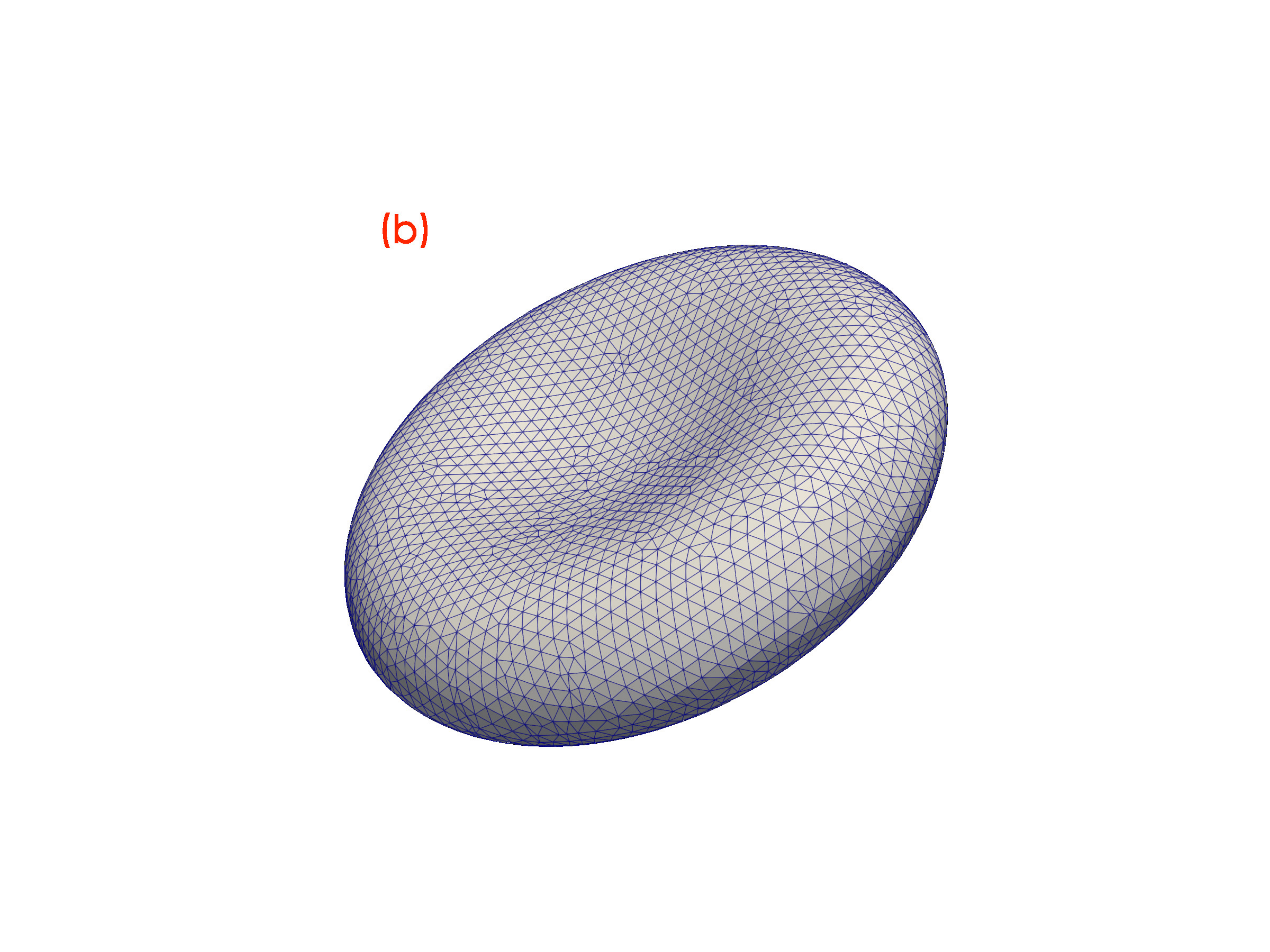}\\
\includegraphics[width=0.4\linewidth]{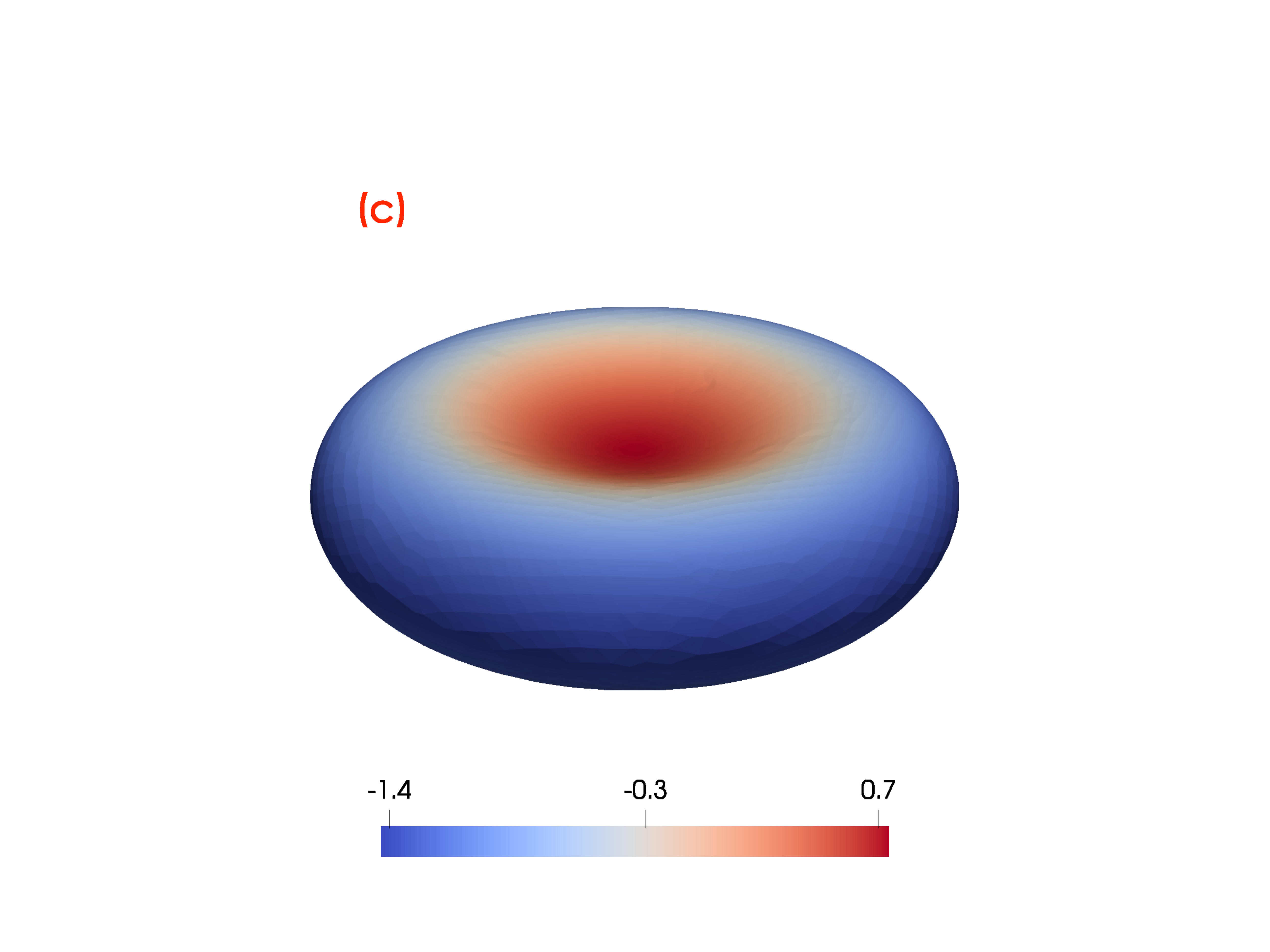}
\includegraphics[width=0.4\linewidth]{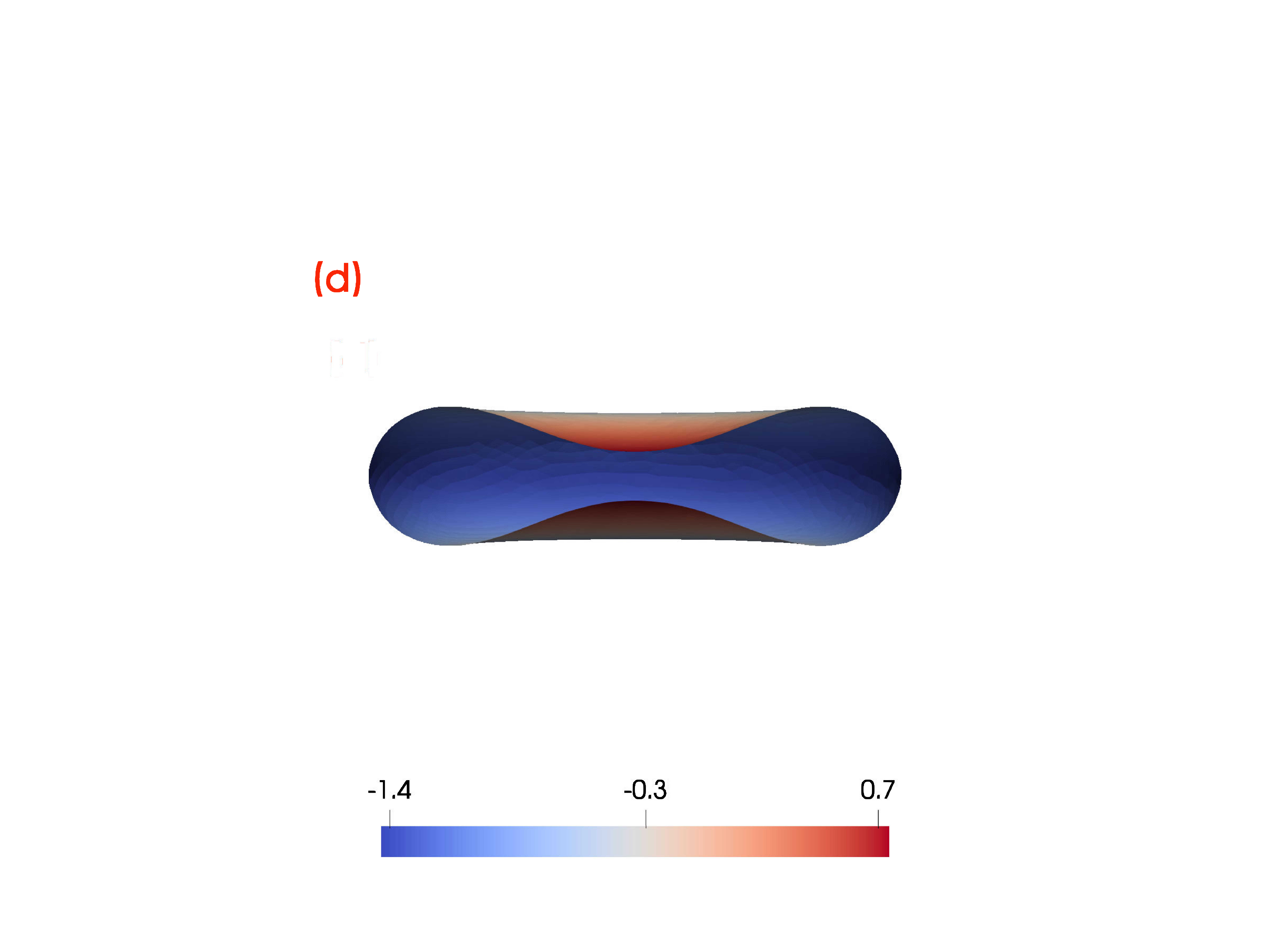}
\caption{[$\bkap=0$] Relaxation of a $4\times 4\times 1$ oblate ellipsoid into a discocyte shape. (a) $t=0$; (b) $t=0.7$; (c) curvature at $T=1$; (d) the cross-section profile at $T=1$.}
\label{fig:3d1}
\end{figure}

\begin{figure}[!htbp]
    \centering
    \includegraphics[width=0.8\linewidth]{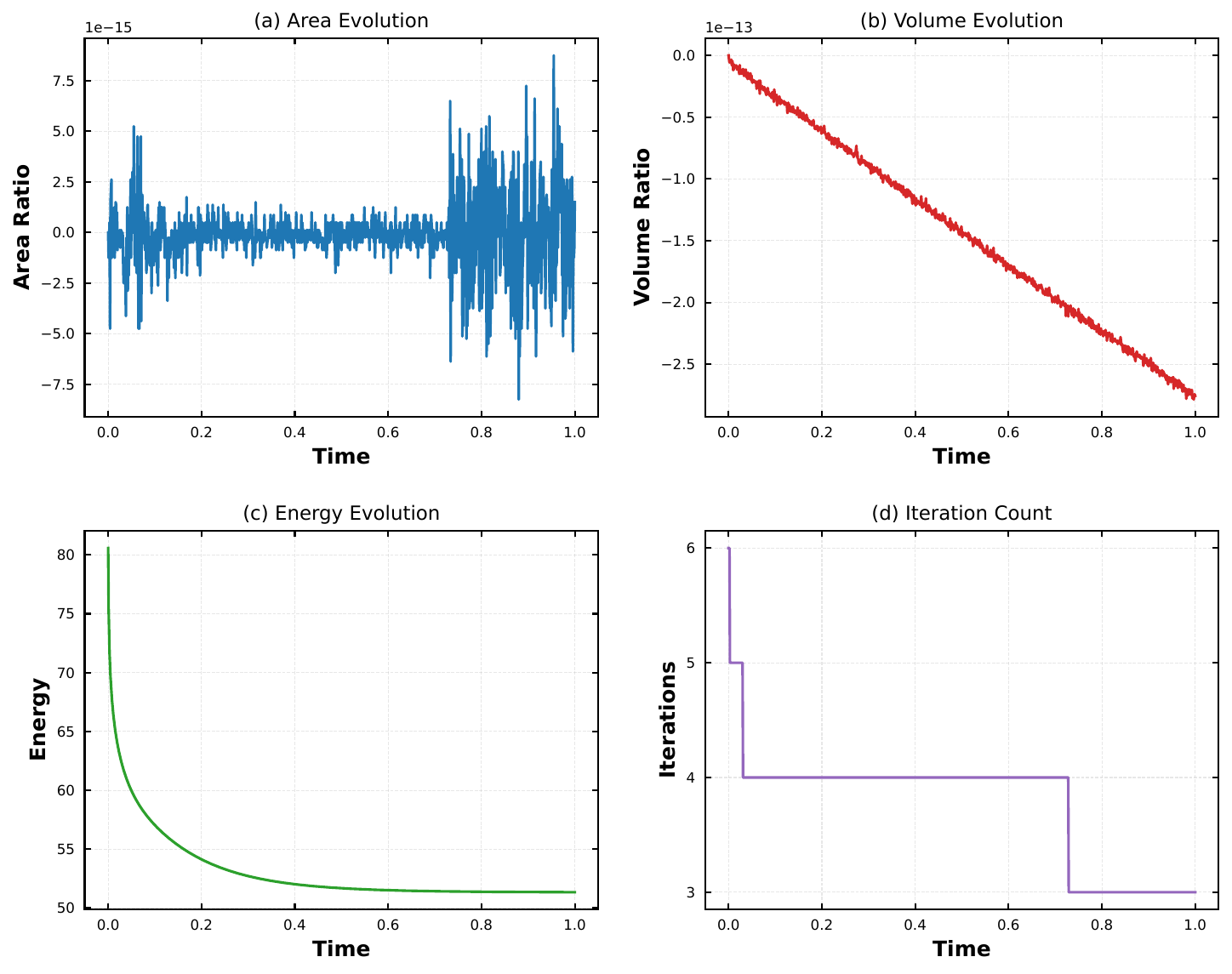}
    \caption{[$\bkap=0$] Time histories of the discrete quantities in the evolution of the 3D oblate ellipsoid.}
    \label{fig:3d_ep1__metrics}
\end{figure}

\vspace{0.5em}
\noindent{\bf Example 5: The prolate branch and spontaneous curvature effects.} 

We systematically vary the reduced volume by setting the initial shapes to $n\times 1\times 1$ prolate ellipsoids ($n \in \{2, 4, 6, 8\}$). For $\bkap = 0$ (Figure~\ref{fig:3d_ep2_bkap_0}), the energy minimization triggers the formation of a pronounced neck, driving the surfaces into the \textit{prolate-dumbbell} branch. When a nonzero spontaneous curvature $\bkap = -1.2$ is introduced (Figure~\ref{fig:3d_ep2_bkap_m1}), the preferred local mean curvature alters the equilibrium profiles, resulting in significantly thicker necks. Our scheme robustly captures these delicate, physically driven morphological bifurcations.

\begin{figure}[!htbp]
    \centering
    \subfloat[$2:1:1$]{\includegraphics[width=0.24\linewidth]{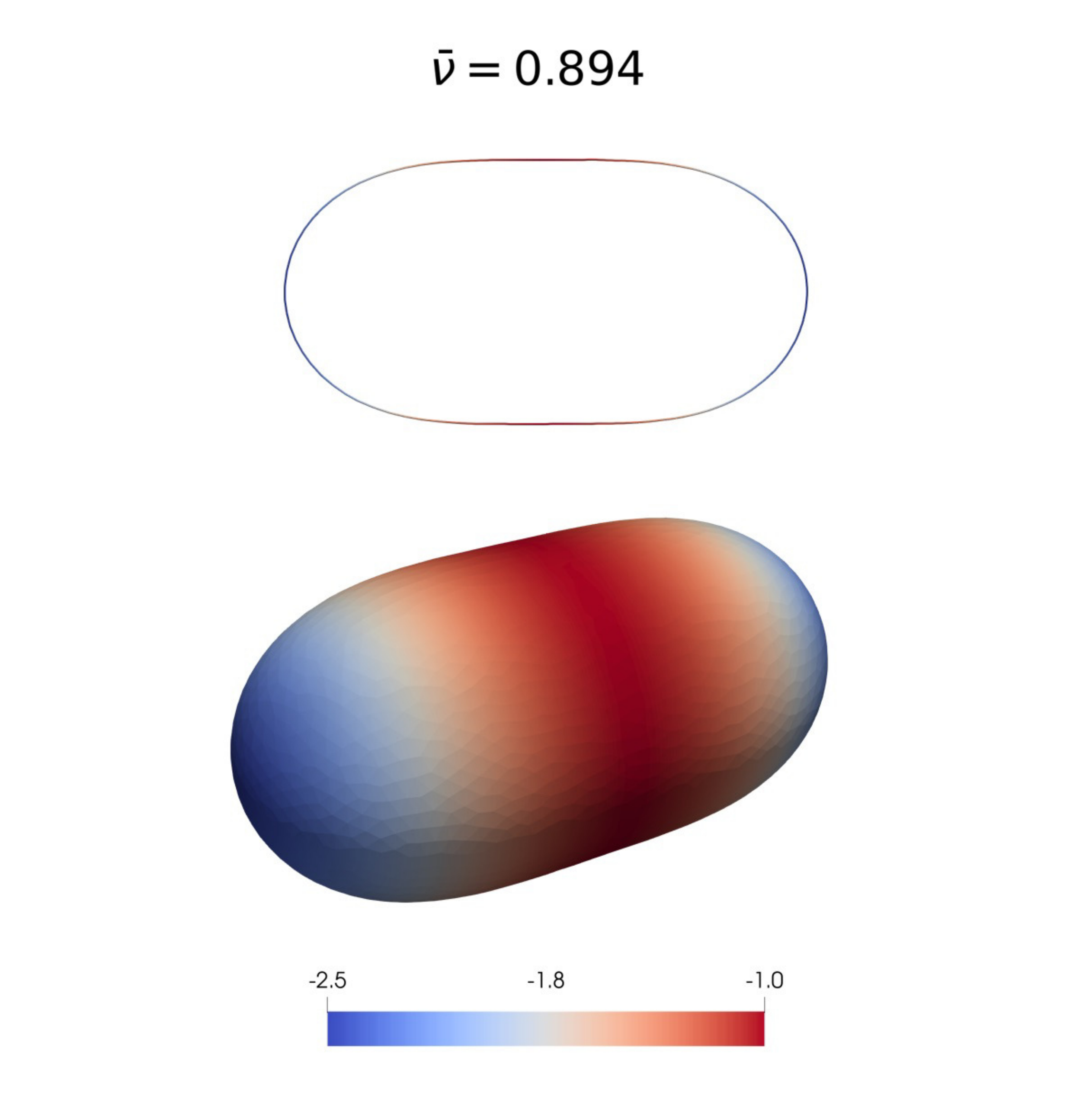}} \hfill
    \subfloat[$4:1:1$]{\includegraphics[width=0.24\linewidth]{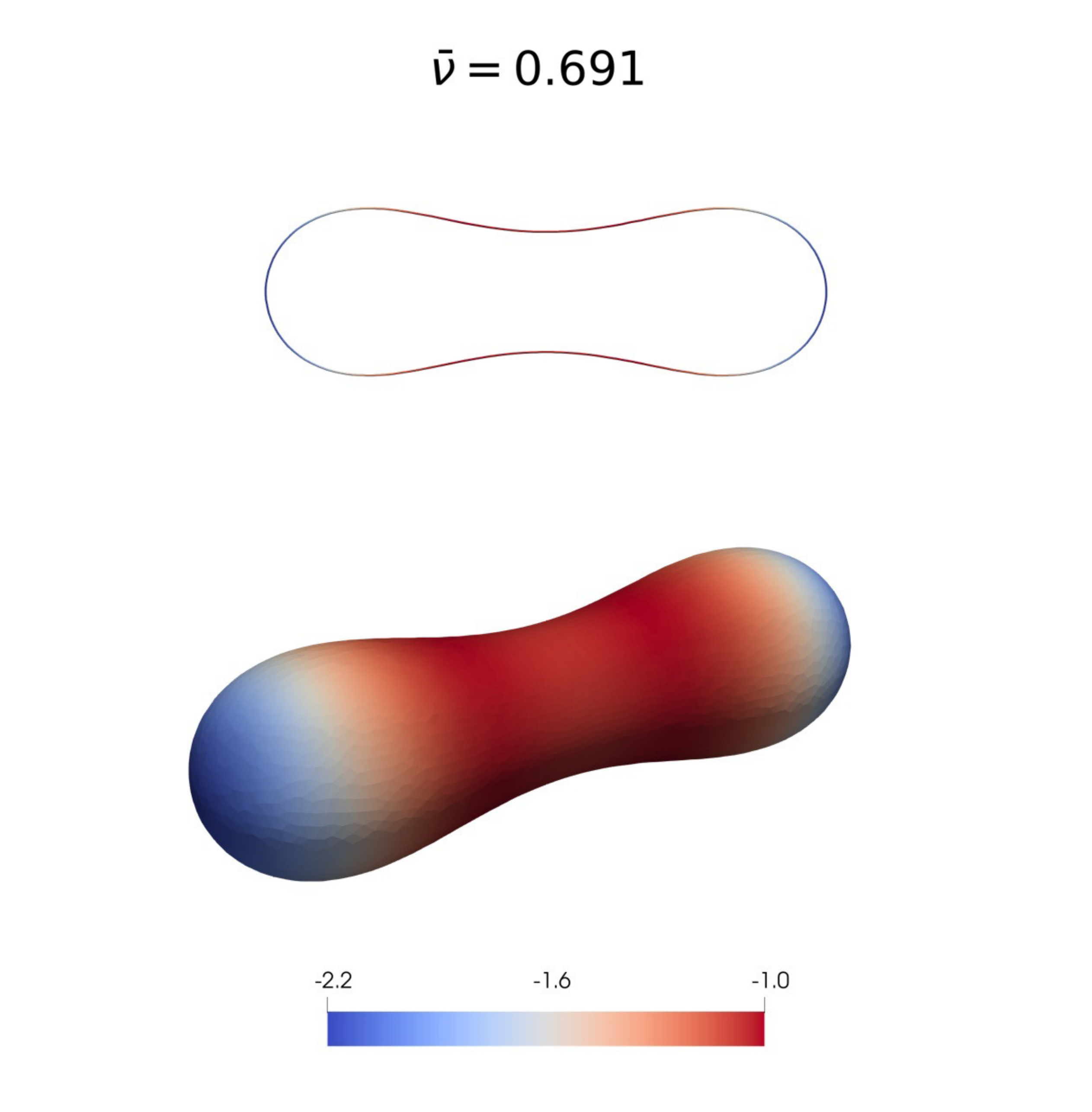}} \hfill
    \subfloat[$6:1:1$]{\includegraphics[width=0.24\linewidth]{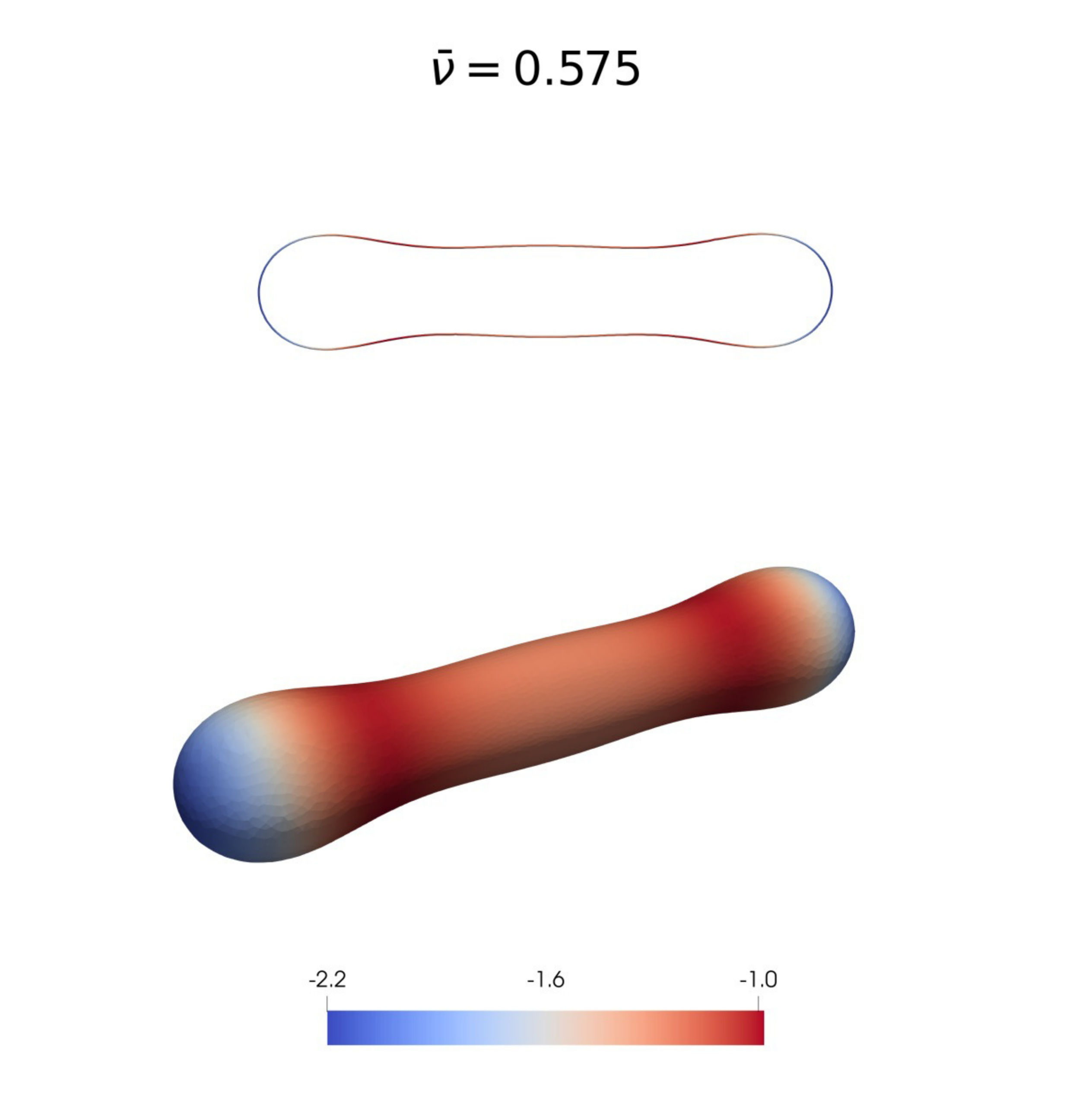}} \hfill
    \subfloat[$8:1:1$]{\includegraphics[width=0.24\linewidth]{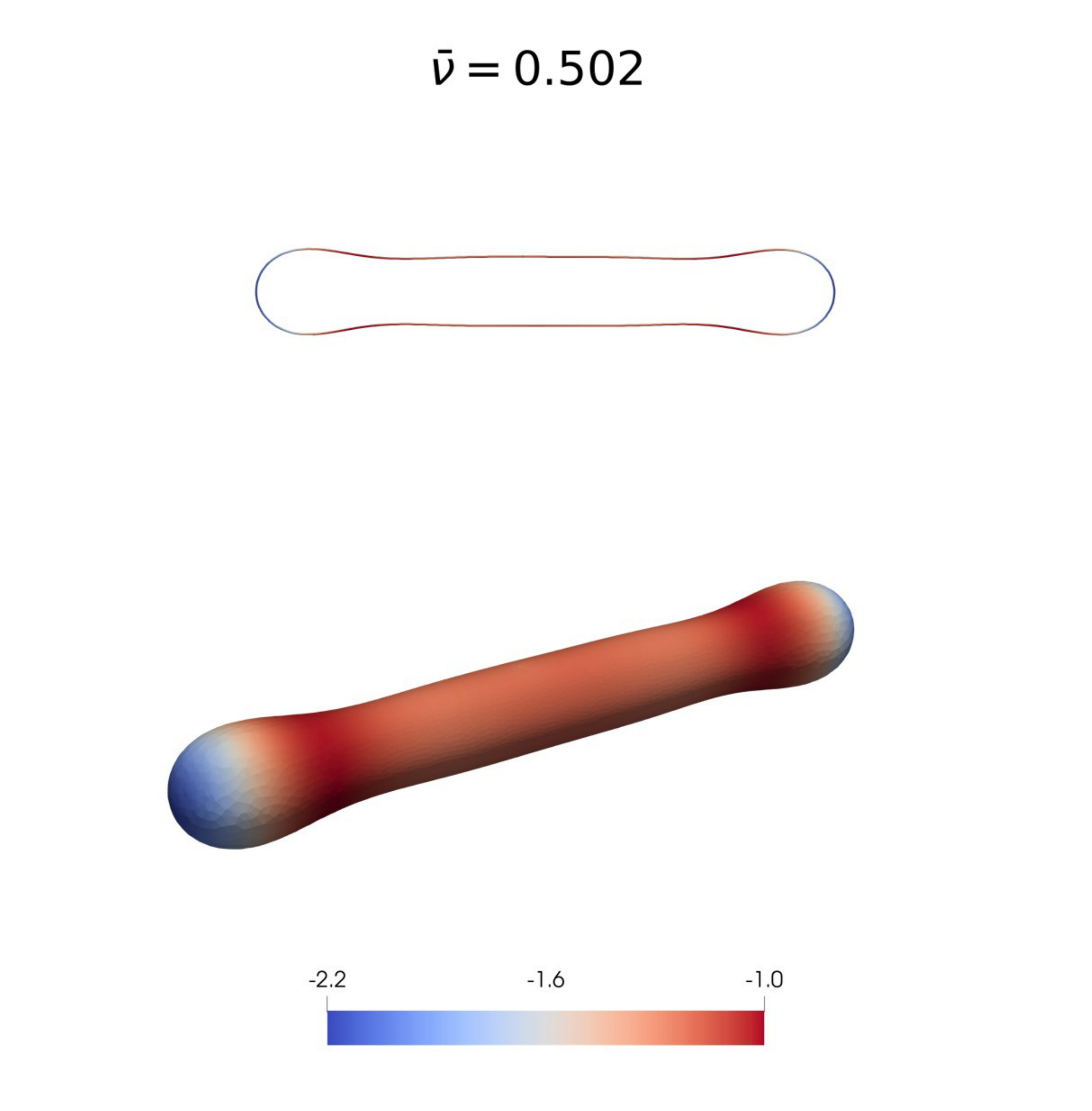}}  
    \caption{[$\bkap=0$] Visualizations of the surface morphologies at the final time $T=1.0$ for prolate ellipsoids of different dimensions.}
    \label{fig:3d_ep2_bkap_0} 
\end{figure}

\begin{figure}[!htbp]
    \centering
    \subfloat[$2:1:1$]{\includegraphics[width=0.24\linewidth]{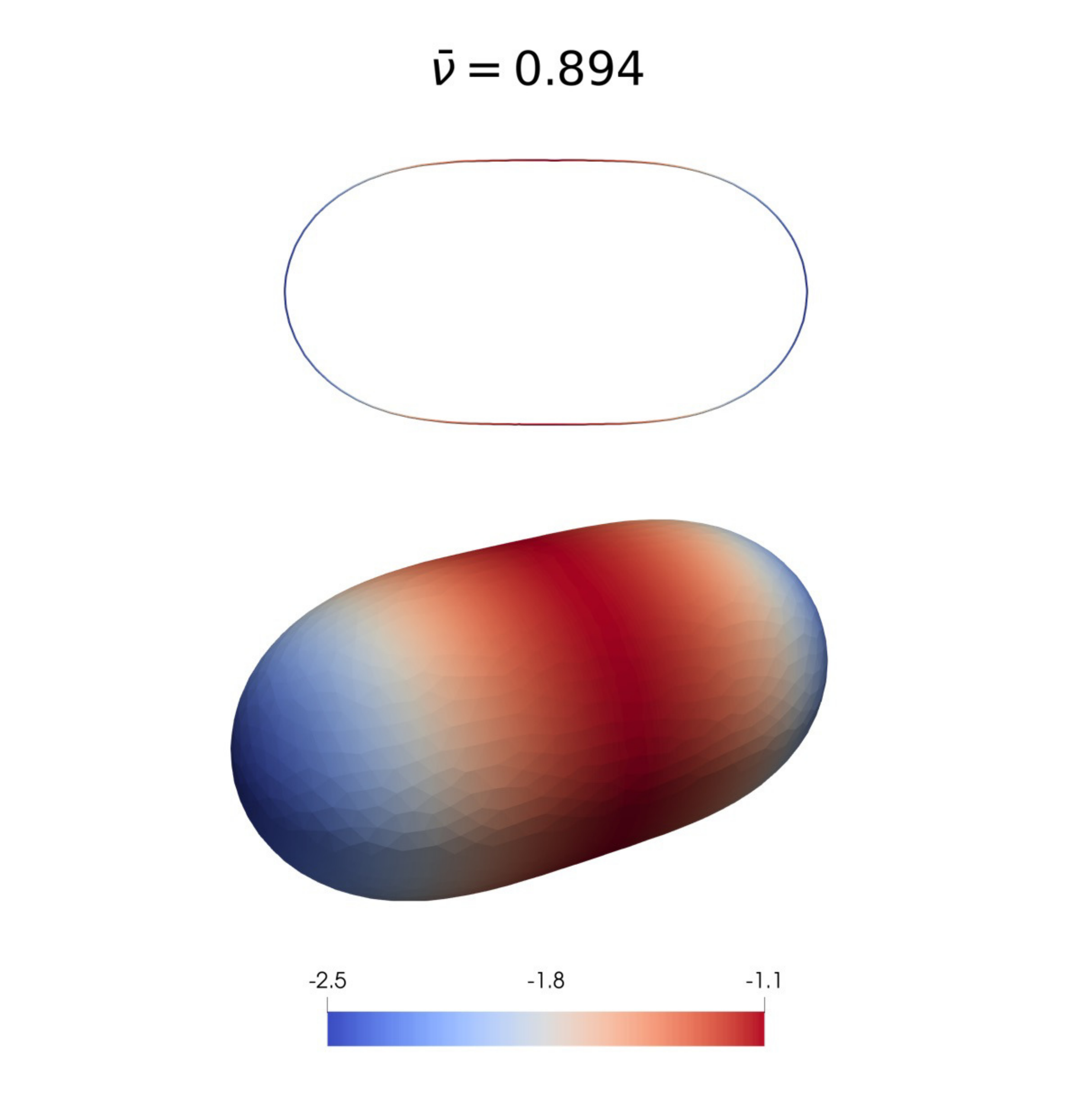}} \hfill
    \subfloat[$4:1:1$]{\includegraphics[width=0.24\linewidth]{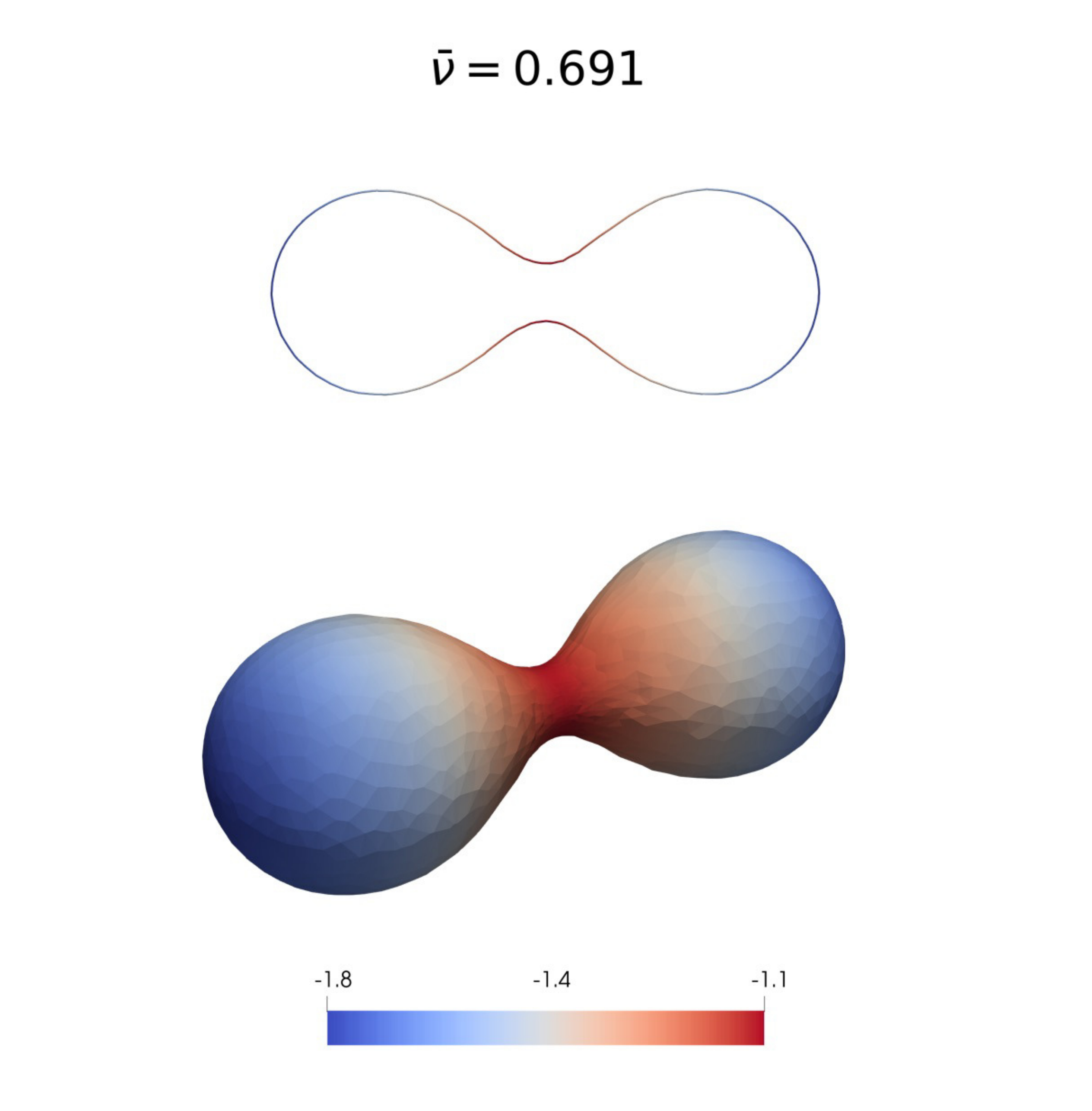}} \hfill
    \subfloat[$6:1:1$]{\includegraphics[width=0.24\linewidth]{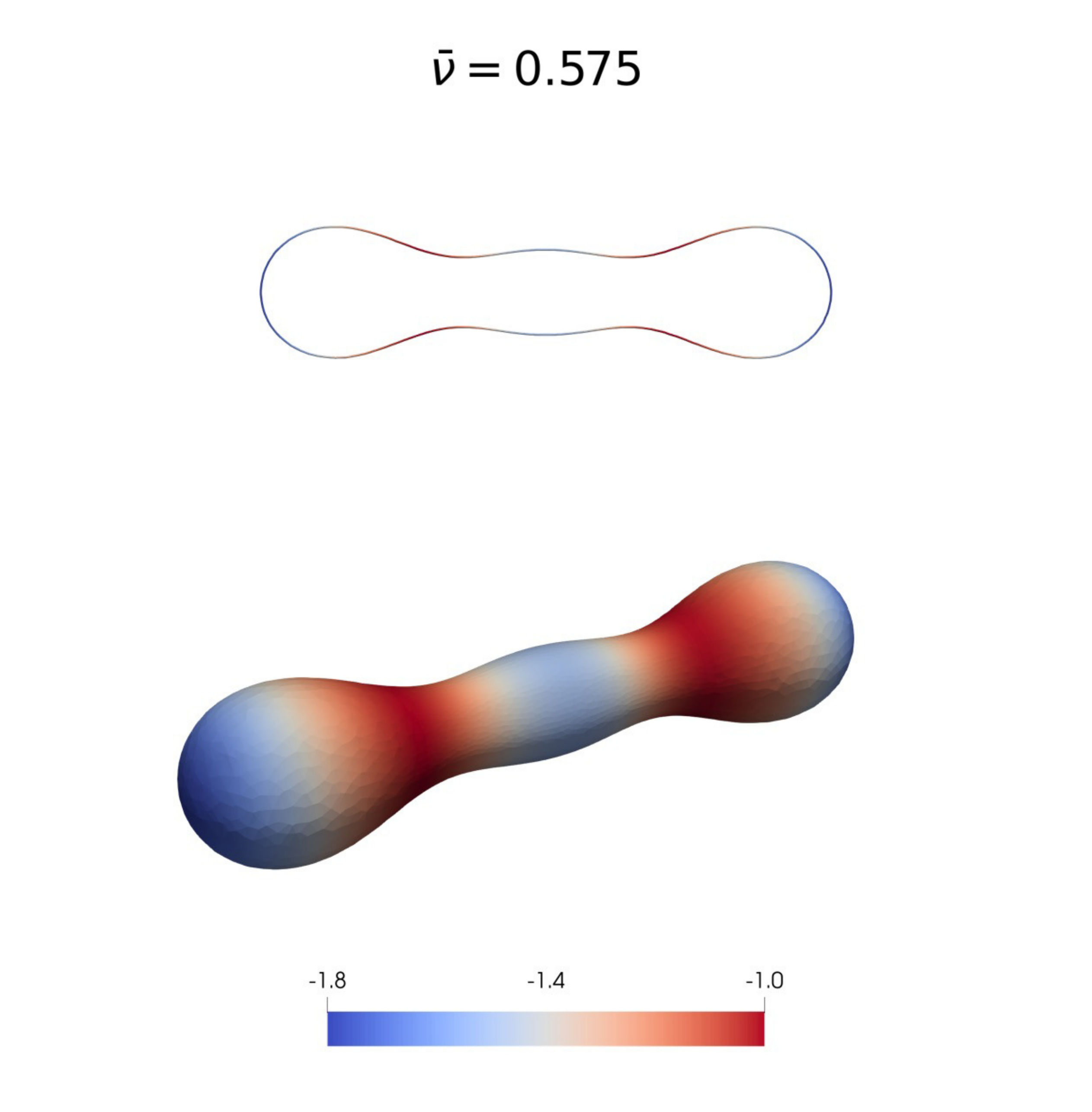}} \hfill
    \subfloat[$8:1:1$]{\includegraphics[width=0.24\linewidth]{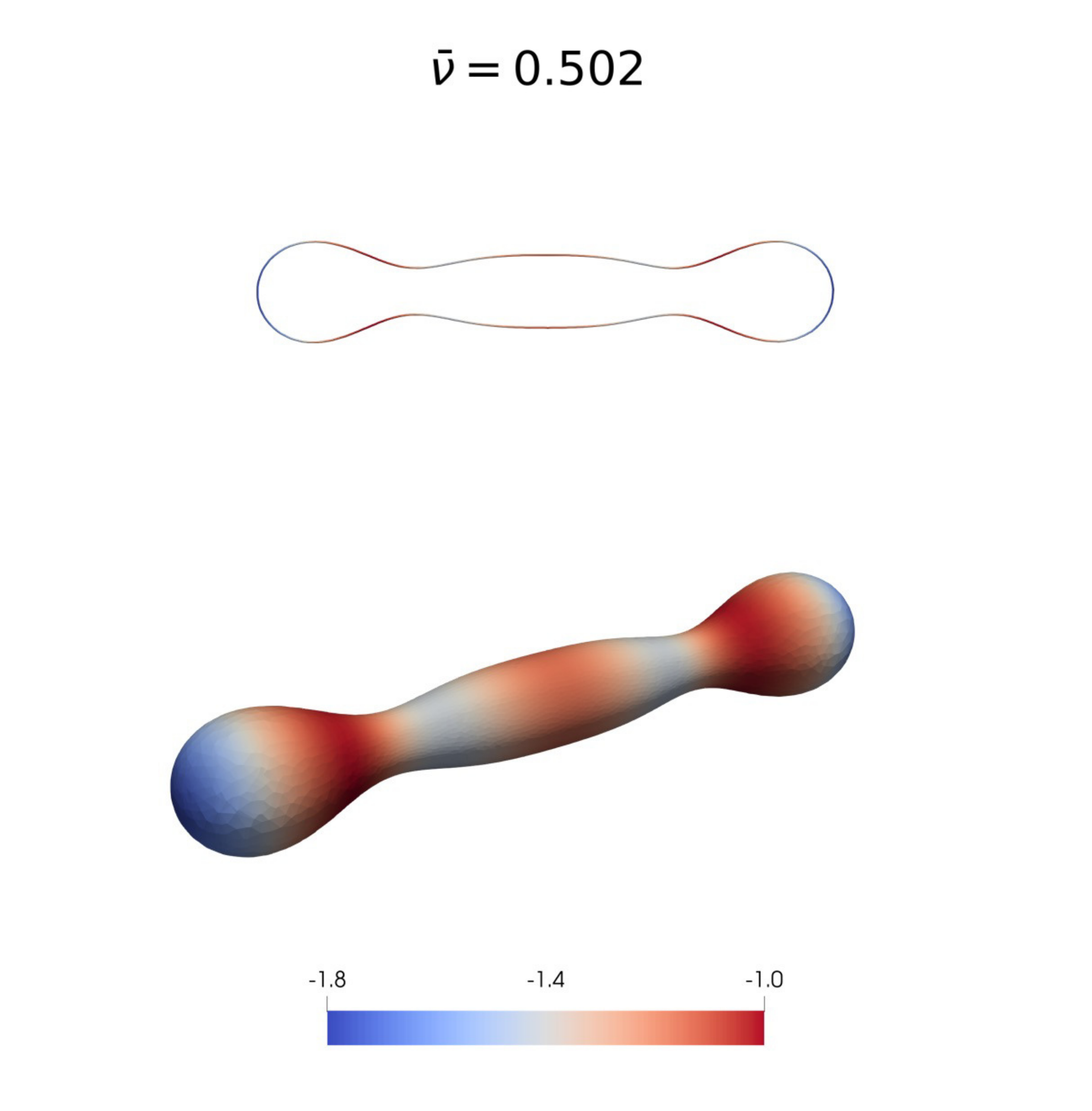}}
    \caption{[$\bkap=-1.2$] Visualizations of the surface morphologies at the final time $T=2$. The nonzero spontaneous curvature alters the neck radii.}
    \label{fig:3d_ep2_bkap_m1}
\end{figure}

\vspace{0.5em}
\noindent{\bf Example 6: Startup strategy for singular 3D data.} 

To verify the robustness of the startup strategy in 3D, we start with an initial standard cuboid of dimension $4\times 4\times 1$. The initial surface is only $C^0$-continuous with sharp corners, which poses some difficulty for our main scheme due to the singular initial curvature. We then deploy the startup scheme (Variant II) for just 10 steps with the time step size $\Delta t = 10^{-4}$, which rapidly smooths out the singular edges (see Figure~\ref{fig:3d_ep2}(b)). After this startup, we switch to the main scheme with the time step size $\Delta t = 10^{-3}$. As demonstrated in Figure~\ref{fig:3d_ep4_metrics}, this startup strategy is effective and preserves the volume and surface area throughout the evolution. Moreover, the BGN tangential motion maintains good mesh quality during the
  relaxation toward a smooth equilibrium shape.

\begin{figure}[!htbp]
    \centering
    \subfloat[$t=0$ (Cuboid)]{\includegraphics[width=0.24\linewidth]{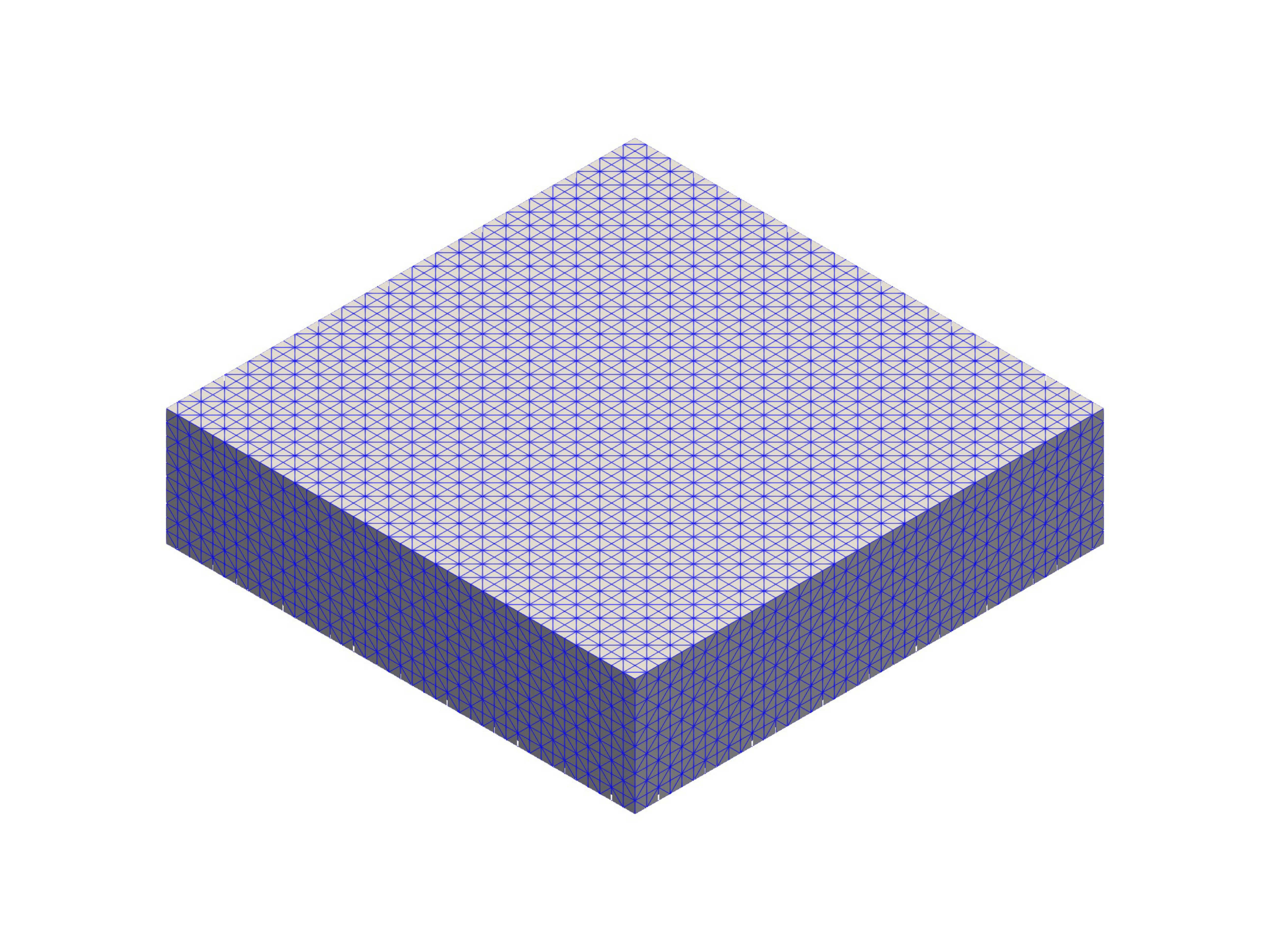}} \hfill
    \subfloat[$t=10^{-3}$]{\includegraphics[width=0.24\linewidth]{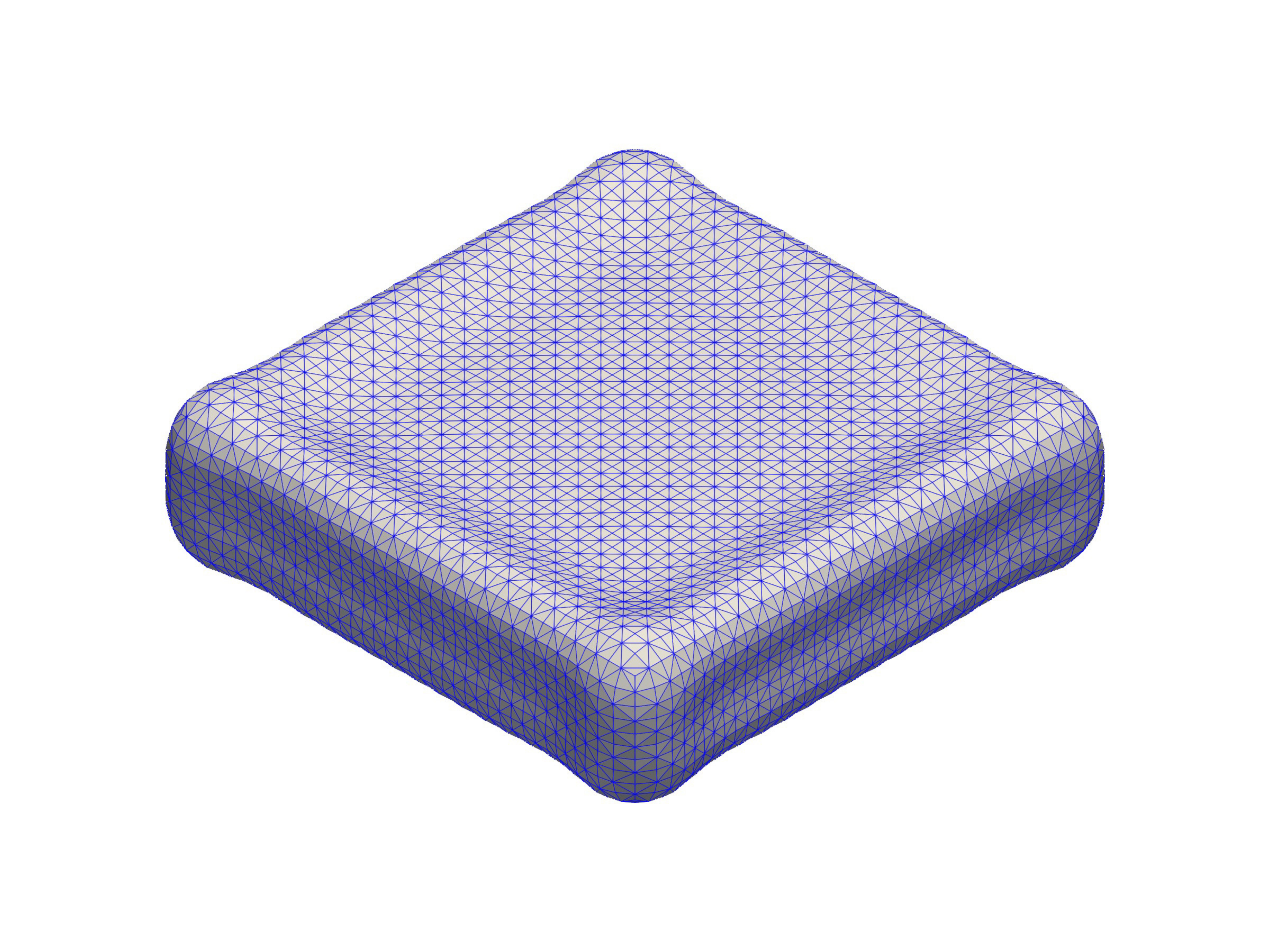}} \hfill
    \subfloat[$t=0.1$]{\includegraphics[width=0.24\linewidth]{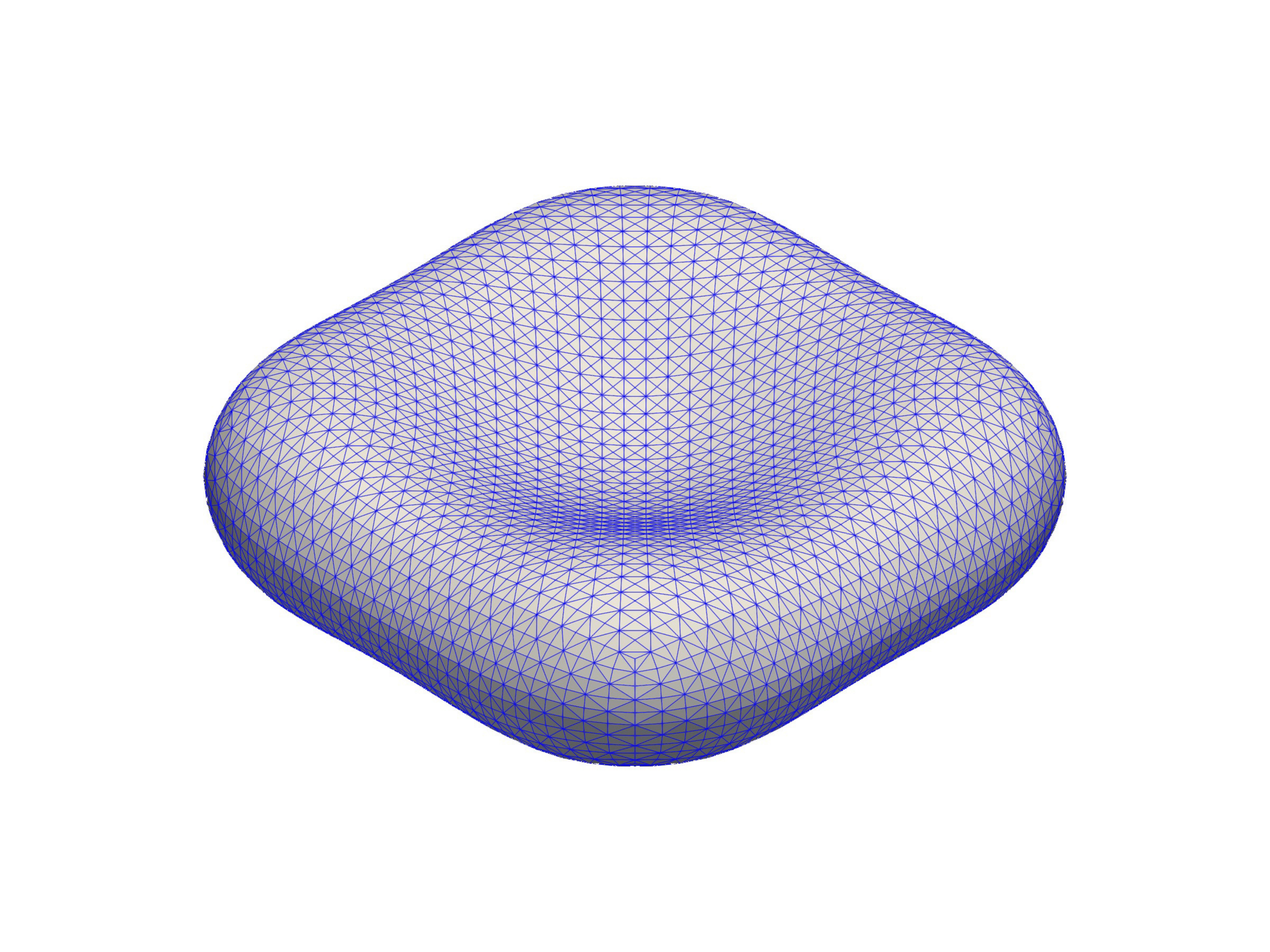}} \hfill
    \subfloat[$t=0.3$]{\includegraphics[width=0.24\linewidth]{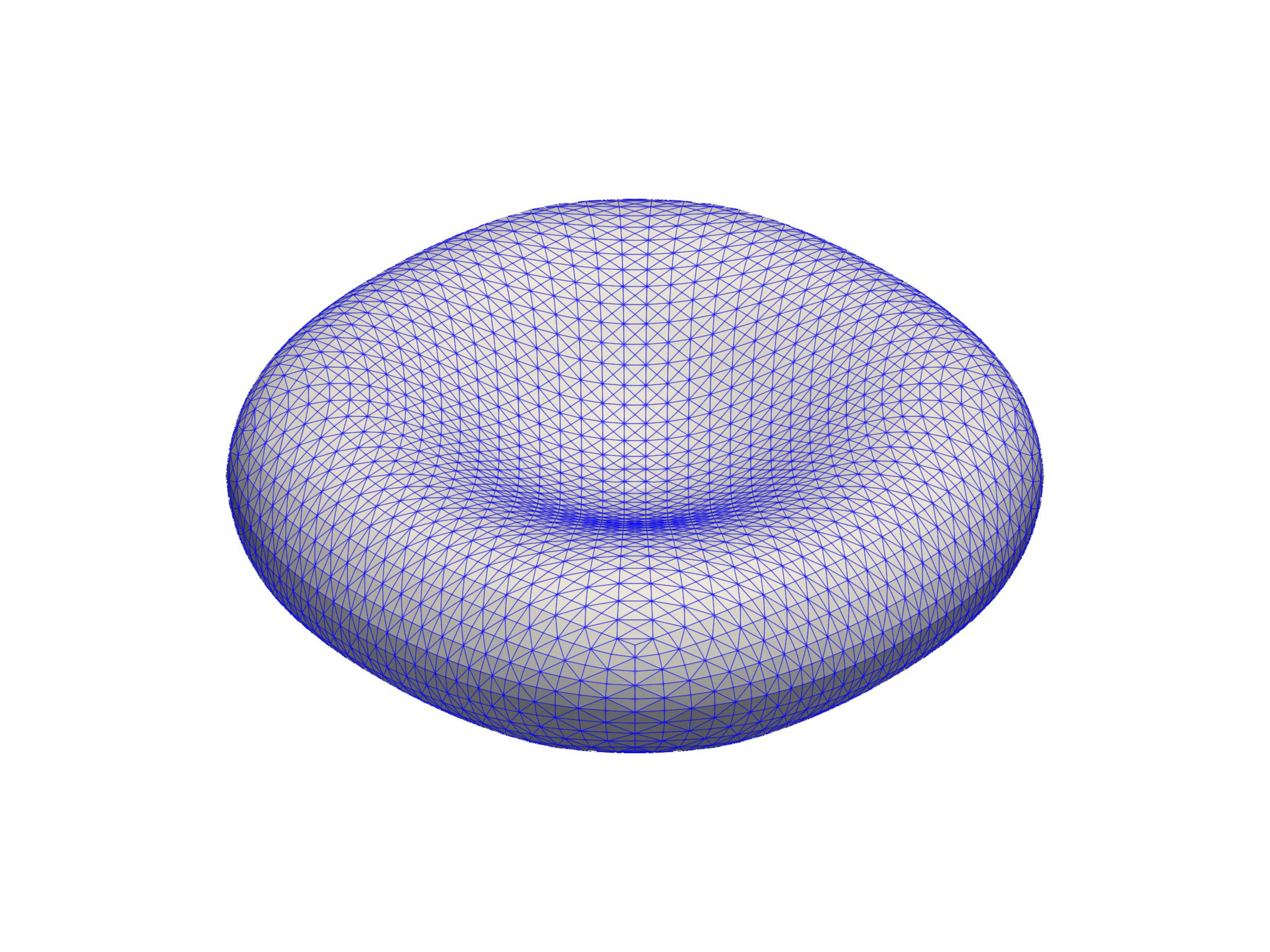}}
    \caption{[$\bkap=0$] Snapshots in the evolution of an initially nonsmooth cuboid, where we employ the {\bf Variant II} scheme for the startup strategy.}
    \label{fig:3d_ep2}
\end{figure}

\begin{figure}[!htp]
    \centering
    \includegraphics[width=0.85\linewidth]{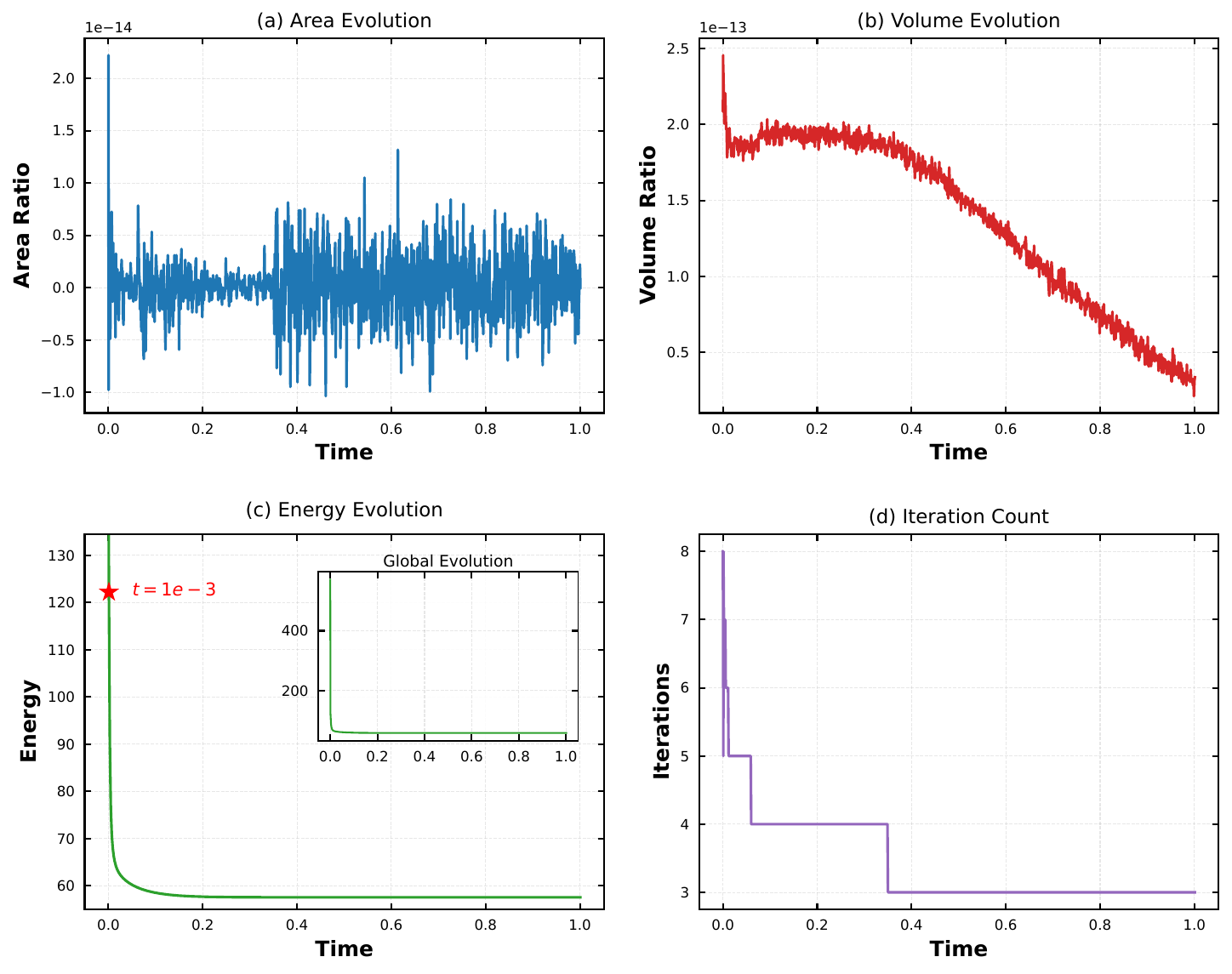}
    \caption{Time histories of discrete quantities in the evolution of an initially nonsmooth cuboid.}
    \label{fig:3d_ep4_metrics}
\end{figure}

\section{Conclusions}\label{sec:con}

We have proposed a geometric structure-preserving parametric finite element method for the constrained Helfrich flow of closed curves and surfaces. The method is built on a two-part splitting of the velocity: the normal velocity is first computed from the gradient-flow structure, and this velocity is then applied together with a suitable tangential velocity to move the mesh and correct the geometric constraints.

The first part of the method is the gradient-flow approximation. By using the curvature evolution equation and imposing the volume and surface area constraints through the normal velocity, this step yields a fully discrete linear system. The resulting approximation satisfies an unconditional energy dissipation estimate, while the normal-velocity constraints provide the discrete counterpart of the continuous volume and area conservation laws.

The second part applies the computed normal velocity together with the BGN tangential velocity. This step serves as the actual mesh update and, at the same time, corrects the enclosed volume and surface area as well as the computation of the geometric curvature. The time-weighted interface normal gives exact volume preservation, while the area-correction multiplier enforces exact surface area preservation. Thus the full scheme combines energy decay, exact preservation of the geometric constraints, and good mesh quality in a single framework.

We also discussed two variants of the method, which reflect different practical tradeoffs. The fully linear variant keeps the energy-stable gradient-flow step but gives up exact geometric correction, while the startup variant improves robustness for nonsmooth or singular initial data before switching to the main structure-preserving scheme. Possible extensions include high-order parametric finite element discretizations and generalizations to more complex membrane energies or additional physical constraints.

\section*{Acknowledgements}

This work was partially supported by the National Natural Science Foundation of China (No. 12401572, Q.Z.) and the Key Project of the National Natural Science Foundation of China (No. 12494555, Q.Z.).

\appendix

\section{Differential calculus}
Let $\Gamma(t)\subset\bR^d$ be an evolving hypersurface without boundary, with the velocity $\mathscr{\vv V}$ defined in \eqref{eq:velocity}. 
For a sufficiently smooth function $f$ defined on $\Gamma(t)$, we recall the Reynolds transport theorem (see, e.g., \cite[Theorem~3.2]{Barrett20}).
\begin{align}\label{eq:transport}
    \ddt\int_{\Gt} f \dH^{d-1} &= \int_{\Gt} \bigl( \partial_t^\circ f + f \nabs\cdot\mathscr{\vv V} \bigr) \dH^{d-1}\nn\\
    &= \int_{\Gamma(t)}\bigl(\partial_t^\square f - f\,\mathscr{V}\,\varkappa\bigr)\dH^{d-1},
\end{align}
where $\partial_t^\circ$ denotes the material time derivative  
\begin{equation*}
	\partial_t^\circ f = \ddt f(\vec x(\vec\rho,t), t)\quad\forall(\vec\rho,t)\in\Upsilon\times [0,T],
\end{equation*}
which follows the parameterization \eqref{eq:para}, and $\partial_t^\square$ stands for the normal time derivative:
\begin{equation} \label{eq:normalf}
	\partial_t^\square f = \partial_t^\circ f-\vec{\mathscr{V}}\cdot\nabs f\qquad\mbox{on}\quad\Gt,
\end{equation}
which measures the change of $f$ on the moving surface in the normal direction.

\bibliographystyle{abbrv}
\bibliography{bib}

\end{document}